\documentclass[12pt,a4paper]{article}

\usepackage[utf8]{inputenc}
\usepackage{amssymb, amsthm, amsmath}
\usepackage[english]{babel}
\usepackage[T1]{fontenc}
\usepackage{graphicx}
\usepackage{mathtools}
\usepackage[font=small,labelfont={normal, bf}]{caption}
\usepackage{subcaption}
\usepackage{xcolor}

\usepackage[cm]{fullpage}

\usepackage{autonum}

\makeatletter
\newcommand{\@giventhatstar}[2]{#1\;\middle|\;#2)}
\newcommand{\@giventhatnostar}[3][]{#1#2\;#1|\;#3#1}
\newcommand{\giventhat}{\@ifstar\@giventhatstar\@giventhatnostar}
\makeatother

\newtheorem{theorem}{Theorem}
\newtheorem{proposition}{Proposition}
\theoremstyle{definition}

\newtheorem{example}{Example}

\title{Asymmetric L\'evy walks driven by convex combination of fractional material derivatives}
\author{
Łukasz Płociniczak\thanks{Faculty of Pure and Applied Mathematics, Wroclaw University of Science and Technology, Wyb. Wyspia\'nskiego 27, 50-370 Wroc{\l}aw, Poland}, \and 
Marek A. Teuerle$^*$, \and
Hubert Woszczek$^{*,}$\thanks{\underline{Corresponding author:} \texttt{hubert.woszczek@pwr.edu.pl}}
}
\date{}

\begin{document}

\maketitle

\begin{abstract}
We analyze a class of linear partial differential equations that arise as deterministic descriptions of the scaling limits of L\'evy walks, in which transport is driven by a convex combination of fractional material derivatives and a source term. Using techniques of Fourier-Laplace transforms, we first prove the existence of mild solutions for continuous initial data. Using a recently obtained pointwise representation of the fractional material derivative, we then identify a necessary and sufficient condition on the source term that guaranties the solution to remain a probability density for all times (non-negativity and unit mass). Motivated by the need to preserve these probabilistic properties in computations, we construct a finite-volume discretization that is probability conservative by construction. We establish discrete stability and a convergence result for the continuous weak solution as space and time steps tend to zero. Extensive numerical experiments validate the scheme: total mass is conserved, non-negativity is maintained, and the computed solutions reproduce the known analytic representations of the probability density functions associated with the L\'evy walk process. The combined theoretical and numerical framework provides a reliable tool for studying anomalous transport governed by fractional dynamics.
\end{abstract}

\textbf{Keywords:} fractional material derivative, L\'evy walk, anomalous diffusion, pointwise representation, finite-volume method\\

\textbf{MSC Codes:} 35R11, 60G51, 65M08

\section{Introduction}

Nonlocal operators arise in a wide range of contexts, from stochastic processes to partial differential equations, in both theoretical investigations and applied modeling. Among these, fractional-order derivatives, defined in several distinct ways, are perhaps the most prominent examples. They extended the notion of the classical integer-order derivative in multiple directions, motivated both by growing experimental evidence of their effectiveness in describing complex phenomena and by intrinsic mathematical interest. Naturally, such generalizations must be formulated carefully so as to remain well-defined and to preserve a sense of naturalness consistent with their classical counterparts.

The \emph{fractional material derivative} was introduced in \cite{sokolov2003towards} as the deterministic operator capable of describing the evolution of the probability density function of the  L\'evy walks (LW), a class of stochastic processes that capture anomalous transport. More precisely, the probability denisty function of the scaling limits of L\'evy walks is a mild solution of a partial differential equation involving a fractional material derivative (see section 2 in the article of Płociniczak and Teuerle \cite{Plociniczak2024} for more details on the link between L\'evy walks and fractional material derivative). Generally, for anomalous diffusion, the mean squared displacement (MSD) of a particle starting in origin deviates from the classical linear growth law, that is, $\langle x^{2}(t) \rangle \sim t$, and instead scales as
\begin{equation}
\langle x^{2}(t) \rangle \sim t^{\alpha}, \qquad \alpha \neq 1,
\end{equation}
see \cite{MetzlerKlafter}. When $0<\alpha<1$ the process is termed \emph{subdiffusion}, while the regime $1<\alpha<2$ corresponds to \emph{superdiffusion}. L\'evy walks provide a canonical model for the latter case, and the fractional material derivative offers a natural deterministic description of their macroscopic dynamics. LW's are stochastic processes in which a particle performs successive flights with lengths drawn from a heavy-tailed probability distribution, while each flight is traversed at a finite velocity. This coupling of step lengths and travel times distinguishes L\'evy walks from L\'evy flights and leads to superdiffusive spreading observed in many natural and physical systems. Anomalous dynamics arise naturally in the description of diffusion in heterogeneous or disordered environments and have been found in many fields such as porous media \cite{plociniczak2014approximation, plociniczak2015analytical, plociniczak2019derivation, pachepsky2000simulating, El20}, neurological tissues \cite{magin2010fractional}, polymers \cite{muller2011nonlinear}, single particle tracking in biophysics \cite{tabei2013intracellular, Sun17, wong2004anomalous}, plasma physics \cite{Del05}, astrophysics \cite{lawrence1993anomalous}, chemotaxis \cite{langlands2010fractional} and financial mathematics \cite{jacquier2020anomalous}. On the other hand, {superdiffusion} refers to processes in which particles spread faster than in classical diffusion, often due to long-range correlations or heavy-tailed jump distributions. Superdiffusive dynamics are frequently modeled by L\'evy flights and L\'evy walks, and they appear in diverse applications such as turbulent transport in fluids and plasmas \cite{shlesinger1993strange, zaburdaev2015levy}, animal foraging patterns \cite{viswanathan1999optimizing}, human travel and mobility \cite{brockmann2006scaling, gonzalez2008understanding}, ecology and biology \cite{humphries2014optimal, reynolds2009levy, alves2016transient}, light transport in random media \cite{mercadier2009levy, barthelemy2008levy}, and even in financial time series exhibiting heavy-tailed fluctuations \cite{mantegna1995scaling}.

Nonlocal differential equations are also interesting mathematical objects to study and investigate. They provide a deeper structure and higher difficulty of the corresponding equations. We do not want to give a complete overlook of the literature of fractional differential equations, but rather mention several important papers both in theory and numerical methods. The first comprehensive study of the solvability of the linear anomalous diffusion equation was carried out in \cite{Sak11} where the authors showed that a typical solution has a discontinuous time derivative at its origin. In subsequent years, the study of similar time-nonlocal PDEs evolved in various directions. For example, nonlinear diffusion equations were investigated, decay properties were studied, and different types of solutions were introduced \cite{allen2016parabolic, wittbold2021bounded, vergara2015optimal, akagi2019fractional, dipierro2019decay,plociniczak2018existence}. Along with an interesting mathematical structure that requires a careful setting in appropriate functional spaces, the numerical study of nonlocal equations also possesses many interesting hurdles. The main difficulties arise from the nonlocality of the differential operator, the lack of sufficient smoothness of the solution, and the more stringent stability conditions. However, a great deal of progress has been made in the last two decades. The interested reader is invited to consult the following paper (and references therein) for finite element approaches \cite{Jin19a, Mus18, plociniczak2022error,plociniczak2024fully, plociniczak2023linear}, spectral scheme \cite{Lin07}, and fast convolution quadrature methods \cite{lopez2025convolution, cuesta2006convolution}. In this paper, we construct a finite-volume scheme for our main equation since this approach is much more appropriate when one wants to preserve the mass of the solution (that is, the probability). Related results can be found in \cite{karaa2017finite, liu2014new} (see also a new interesting account on the L\'evy-Fokker-Planck equation with fractional Laplacian \cite{bailo2025finite}). 

Let $L_{loc}^1(\Omega)$ be the space of locally, that is, in compact sets $K\subseteq \Omega$, integrable functions on $\Omega$. Define \emph{fractional integral} of the order $\alpha>0$ of $y\in L_{loc}^1\left(\mathbb{R}_+\right)$ given by
\begin{equation}
I^{\alpha}y\left(t\right) = \frac{1}{\Gamma\left(\alpha\right)}\int_0^t\left(t-s\right)^{\alpha-1}y\left(s\right)ds.
\end{equation}
The main nonlocal PDE of our paper is composed as a convex combination of the fractional material derivative equated to a source term
\begin{equation}\label{combinationeq}
\begin{cases}
	p\left(\dfrac{\partial}{\partial t} - \dfrac{\partial}{\partial x}\right)^{\alpha} u\left(x, t\right) + \left(1-p\right)\left(\dfrac{\partial}{\partial t} + \dfrac{\partial}{\partial x}\right)^{\alpha} u\left(x, t\right)  = f\left(x,t\right), \vspace{2pt}\\ 
	\lim_{t \to 0^{+}}I_t^{1-\alpha}u\left(x, t\right) = g\left(x\right),
\end{cases}
\end{equation}
where $f\left(\cdot, t\right) \in L_{loc}^1\left(\mathbb{R}\right)$ for all $t\in\mathbb{R}_+$ and $f\left(x,\cdot\right) \in L_{loc}^1\left(\mathbb{R}_+\right)$ for all $x\in\mathbb{R}$, $g\in L^{1}_{loc}\left(\mathbb{R}\right)$ and {fractional material derivative} is defined by the Fourier-Laplace transform multiplier
\begin{equation}
\mathcal{F}\mathcal{L}\left\{\left(\dfrac{\partial}{\partial t} \pm \dfrac{\partial}{\partial x}\right)^{\alpha} u(x,t) \right\}
(\xi,s):=(s\mp i \xi)^\alpha \mathcal{F}\mathcal{L}\left\{u(x,t)\right\}(\xi,s),
\label{def:fracMatDer}
\end{equation}
provided that the above transforms exist. Equation \eqref{combinationeq} has been seen in \cite{sokolov2003towards} to arise as a general deterministic description of a one-dimensional L\'evy walk. An interesting fact proved in \cite{Plociniczak2024} that the fractional material derivative possesses a pointwise representation state in terms of the fractional \emph{Riemann-Liouville derivatives} defined by
\begin{equation}
D^{\alpha}y\left(t\right) = \dfrac{d^n}{dt^n}I^{n-\alpha}y\left(t\right), \quad n=\left[\alpha\right]+1,
\end{equation}
where $\left[\alpha\right]$ is the integer part of $\alpha$. In particular, for the physically interesting case with $0<\alpha<1$ we have
\begin{equation}
D^{\alpha}y\left(t\right) = \frac{1}{\Gamma\left(1-\alpha\right)}\dfrac{d}{dt}\int_0^t\left(t-s\right)^{-\alpha}y\left(s\right)ds.
\end{equation}
The result is as follows.
\begin{theorem}\label{pointwiserep}\cite{Plociniczak2024}
Let $u\left(\cdot, t\right) \in L_{loc}^1\left(\mathbb{R}\right)$ for all $t\in\mathbb{R}_+$ and $u\left(x,\cdot\right) \in L_{loc}^1\left(\mathbb{R}_+\right)$ for all $x\in\mathbb{R}$. Then, for $0<\alpha<1$ we have
\begin{equation}\label{pointwiserepeq}
	\left(\dfrac{\partial}{\partial t} \pm \dfrac{\partial}{\partial x}\right)^{\alpha} u\left(x, t\right) = \frac{1}{\Gamma\left(1-\alpha\right)}\left(\dfrac{\partial}{\partial t} \pm \dfrac{\partial}{\partial x}\right) \int_0^t \left(t-s\right)^{-\alpha} u\left(x \mp \left(t-s\right), s\right)ds.
\end{equation}
\end{theorem}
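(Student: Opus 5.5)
The plan is to verify the identity on the Fourier–Laplace side. Both sides are well defined whenever the transforms exist, and the fractional material derivative is defined in \eqref{def:fracMatDer} by the multiplier $(s\mp i\xi)^\alpha$. It therefore suffices to show that the right-hand side of \eqref{pointwiserepeq} has the transform $(s\mp i\xi)^\alpha\,\mathcal{F}\mathcal{L}\{u\}(\xi,s)$. Uniqueness of the Fourier–Laplace transform, in the distributional sense, then gives the identity. I treat the sign choice ``$\pm$'' as a single parameter throughout.

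\textbf{Step 1.} Define the auxiliary function
\[
v(x,t)=\frac{1}{\Gamma(1-\alpha)}\int_0^t (t-s)^{-\alpha}\,u\bigl(x\mp(t-s),s\bigr)\,ds ,
\]
which is a space–time convolution of $u$ with the kernel $k(x,\tau)=\frac{\tau^{-\alpha}}{\Gamma(1-\alpha)}\,\delta(x\pm\tau)\,\mathbf{1}_{\tau>0}$. The local integrability hypotheses make $v$ well defined and locally integrable, by Fubini and a change of variables along the characteristics $x\mp\tau$. Compute the transform of the kernel:
\[
\mathcal{F}\mathcal{L}\{k\}(\xi,s)=\frac{1}{\Gamma(1-\alpha)}\int_0^\infty \tau^{-\alpha}e^{-s\tau}e^{\mp i\xi\tau}\,d\tau=(s\pm i\xi)^{\alpha-1}.
\]
The sign in the exponent must be matched to the Fourier convention implicit in \eqref{def:fracMatDer}. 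It is exactly the convention under which the final multiplier comes out as $(s\mp i\xi)^{\alpha}$, and I will fix the convention so that this is consistent. The integral is the Gamma integral $\int_0^\infty\tau^{-\alpha}e^{-z\tau}d\tau=\Gamma(1-\alpha)z^{\alpha-1}$ with $\operatorname{Re} z=\operatorname{Re}s>0$. The identity extends from real $z$ to complex $z$ by analytic continuation, using the principal branch. By the convolution theorem, $\mathcal{F}\mathcal{L}\{v\}=(s\mp i\xi)^{\alpha-1}\mathcal{F}\mathcal{L}\{u\}$.

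\textbf{Step 2.} Apply the first-order transport operator $\partial_t\pm\partial_x$ to $v$. In the transform domain, $\partial_x$ gives the factor $\mp i\xi$ under the same convention. The time derivative gives $s\,\mathcal{F}\mathcal{L}\{v\}-\mathcal{F}\{v(\cdot,0^+)\}$, so I need to check the initial trace. Along each characteristic, $v(x,t)$ is a one-dimensional Riemann–Liouville integral $I^{1-\alpha}$ of the function $s\mapsto u(x\mp(t-s),s)$. The reduction to $1$D is the substitution $y=x\mp t$, which turns $\partial_t\pm\partial_x$ into $d/dt$ at fixed $y$. For locally integrable data, $I^{1-\alpha}$ of an $L^1_{loc}$ function vanishes as $t\to0^+$ in the $L^1_{loc}$ sense, which suffices to kill the boundary term. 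Hence
\[
\mathcal{F}\mathcal{L}\{(\partial_t\pm\partial_x)v\}=(s\mp i\xi)\,(s\mp i\xi)^{\alpha-1}\,\mathcal{F}\mathcal{L}\{u\}=(s\mp i\xi)^{\alpha}\,\mathcal{F}\mathcal{L}\{u\},
\]
which is the defining multiplier.

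\textbf{Main obstacle.} The algebra is simple; the difficulty is rigor under the weak hypothesis $u\in L^1_{loc}$. The transforms need not exist classically, and the boundary term at $t=0$ must be handled. I would first do the whole computation for $u\in C_c^\infty(\mathbb{R}\times(0,\infty))$, where every step is classical. For general $u$, I would interpret both sides distributionally: the derivative in \eqref{pointwiserepeq} in the sense of distributions, and the transform identity whenever the transform of $u$ exists. The identity would then follow by density and continuity of the convolution with $k$ on $L^1_{loc}$.

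An alternative, more direct route to the same identity uses the characteristic variables. With $w(y,t)=u(y\pm t,t)$, one has $\mathcal{F}\mathcal{L}\{w\}(\xi,s)=\mathcal{F}\mathcal{L}\{u\}(\xi,s\pm i\xi)$, up to the sign convention. The operator $\partial_t\pm\partial_x$ becomes $\partial_t$, so the statement reduces to the $1$D identity $\mathcal{L}\{D^\alpha y\}=s^\alpha\mathcal{L}\{y\}$, which holds when $I^{1-\alpha}y(0^+)=0$. I would use this route as a cross-check on the signs.
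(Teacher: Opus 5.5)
The paper gives no proof of this statement; it quotes it from \cite{Plociniczak2024}, so your argument has to stand on its own. It has a genuine gap in Step~2: the boundary term. It is false that the trace $v(\cdot,0^+)$ vanishes for data satisfying the hypotheses. Take $u(x,t)=t^{\alpha-1}\phi(x)$ with $\phi\in C_c^\infty(\mathbb{R})$. This $u$ is locally integrable in each variable, yet after the substitution $s=t\sigma$,
\[
v(x,t)=\frac{1}{\Gamma(1-\alpha)}\int_0^1(1-\sigma)^{-\alpha}\sigma^{\alpha-1}\phi\bigl(x\mp t(1-\sigma)\bigr)\,d\sigma\longrightarrow\Gamma(\alpha)\phi(x),\qquad t\to0^+.
\]
It is true that $\int_0^\varepsilon|I^{1-\alpha}y|\,dt\to0$, but this does not help. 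The term in $\mathcal{L}\{\partial_t v\}=s\mathcal{L}\{v\}-v(0^+)$ is the pointwise limit, not an $L^1$ quantity.

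This is not an artificial case; it is exactly the regime of the paper. There, \eqref{combinationeq} prescribes $\lim_{t\to0^+}I^{1-\alpha}_t u=g$, and the kernel $G_t$ has mass $t^{\alpha-1}/\Gamma(\alpha)$. The proof of Theorem~\ref{exuniq} transforms the equation with the extra terms $-\hat g$, which are precisely this trace. So for the pointwise derivative on $t>0$ the correct transform is $(s\mp i\xi)^{\alpha}\mathcal{F}\mathcal{L}\{u\}-\mathcal{F}\{v(\cdot,0^+)\}$, not your displayed formula. Your characteristic-variable cross-check has the same hole: you correctly say it needs $I^{1-\alpha}y(0^+)=0$, but that fails in general.

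The identity on $t>0$ survives, but for a different reason than the one you give. The discrepancy $\mathcal{F}\{v(\cdot,0^+)\}(\xi)$ is independent of $s$, so it is the transform of $v(\cdot,0^+)\otimes\delta(t)$, a distribution supported on $\{t=0\}$. Indeed, with nonzero trace, $(s\mp i\xi)^{\alpha}\mathcal{F}\mathcal{L}\{u\}$ does not even decay as $s\to\infty$, so \eqref{def:fracMatDer} can only be read distributionally. The clean way to use this is as follows:
\begin{itemize}
\item Extend $u$ and $v$ by zero to $t<0$, so that $v=k*u$ on $\mathbb{R}^2$.
\item Apply $\partial_t\pm\partial_x$ in $\mathcal{D}'(\mathbb{R}^2)$. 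The jump of $v$ at $t=0$ is then part of the derivative, and the convolution theorem gives the multiplier $(s\mp i\xi)(s\mp i\xi)^{\alpha-1}=(s\mp i\xi)^{\alpha}$ with no boundary term.
\item Restrict to the open set $t>0$. There the distributional and pointwise derivatives coincide, which yields \eqref{pointwiserepeq}.
\end{itemize}
Replace ``the boundary term vanishes'' by this and the argument closes.

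Separately, Step~1 has a sign slip. The kernel $\frac{\tau^{-\alpha}}{\Gamma(1-\alpha)}\delta(x\pm\tau)$ produces $u(x\pm(t-s),s)$, which is why your computation returns $(s\pm i\xi)^{\alpha-1}$. Use $\delta(x\mp\tau)$ together with the convention $\mathcal{F}u(\xi)=\int e^{i\xi x}u(x)\,dx$. That convention is forced by \eqref{def:fracMatDer} rather than chosen, since $\partial_t\pm\partial_x$ must have symbol $s\mp i\xi$. You then get $(s\mp i\xi)^{\alpha-1}$ directly.
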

Not only does this form let us define the fractional material derivative under much weaker regularity assumptions, it also helps to devise efficient finite-volume schemes and even solve the $p=0,1$ equations explicitly (see \cite{Plociniczak2024}). In the following, we will use it to prove several other properties and construct a numerical method for \eqref{combinationeq}.

The main results of our paper are the following:
\begin{itemize}
    \item Existence and uniqueness of the mild solution of \eqref{combinationeq} for weak regularity of the data.
    \item Necessary and sufficient condition for the solution to be a probability density function.
    \item Convergent and conservative finite-volume numerical scheme that can be applied to \eqref{combinationeq}. 
\end{itemize}
To our knowledge, all of the above results are the first in the literature. They complete the theory of the one-dimensional deterministic description of the scaling limits of asymmetric L\'evy walks ($p\neq 0.5$).

Finally, we introduce the function spaces that we will work on below. Since the fractional material derivative is normally defined via the Fourier-Laplace multiplier as in \eqref{def:fracMatDer}, it is necessary to define the spaces in which it is well-defined. The space in which the Fourier transform naturally lives is the \emph{Schwartz space} of functions that decay faster than algebraically, more specifically, we define
\begin{equation}
\mathcal{S}\left(\mathbb{R}\right) = \left\{y\in C^{\infty}\left(\mathbb{R}\right): \text{for all } n, m \in \mathbb{N} \, \sup_{x \in \mathbb{R}} \left|x^n\dfrac{d^m y}{dx^n}\right|<\infty \right\}.
\end{equation}
On the other hand, the Laplace transform is well-defined on the space of functions of \emph{exponential type}, that is
\begin{equation}
X\left(\mathbb{R}_+\right) = \left\{y:\mathbb{R}_+ \mapsto \mathbb{R}: \text{ there exist constants } M, \, T_0 \in \mathbb{R}_+, \, a\in \mathbb{R} \text{ such that } \left|y\left(t\right)\right|\leq M e^{at} \text{ for } t\geq t_0\right\}.
\end{equation}
The Lebesgue space of functions integrable with power $1\leq p \leq \infty$ is denoted by $L^p(\Omega)$ with $\Omega \subseteq \mathbb{R}^d$ with $d\geq 1$. The space of continuous functions with the continuous $k$-th derivative is $C^{(k)}(\Omega)$ and, similarly, the space of functions that disappear at infinity is denoted by $C_0(\mathbb{R})$.

In the next section, we prove the existence and uniqueness of its solutions. Section 3 contains the construction of a conservative numerical scheme to solve our main problem. In Section 4, we present several numerical simulations that verify our theoretical findings.

\section{Existence and uniqueness}
For completeness of the discussion concerning the fractional material derivative, we prove some of its basic properties that, to our knowledge, have not been reported in the literature. First, we formulate a simple remark about scaling of fractional material derivatives, which can be helpful in future work about self-similar solutions of equations involving fractional material derivatives. This clearly shows that the fractional material derivative is indeed a differential operator of order $\alpha>0$.
\begin{proposition}
Define $u_{\lambda}\left(x, t\right) := u\left(\lambda x, \lambda t\right)$, $\lambda>0$. Then,
\begin{equation}
	\left(\left(\dfrac{\partial}{\partial t} \pm \dfrac{\partial}{\partial x}\right)^{\alpha} u_{\lambda}\right)\left(x, t\right) = \lambda^{\alpha}\left(\left(\dfrac{\partial}{\partial t} \pm \dfrac{\partial}{\partial x}\right)^{\alpha} u\right)\left(\lambda x, \lambda t\right).
\end{equation}
\end{proposition}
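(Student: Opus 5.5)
The plan is to work with the pointwise representation of Theorem~\ref{pointwiserep} for $0<\alpha<1$, since it needs only local integrability of $u$. The scaling then comes from one change of variables in the time integral plus the chain rule. As a cross-check, and to cover the multiplier definition \eqref{def:fracMatDer} directly, we also sketch the Fourier--Laplace argument, using the standard dilation rules of both transforms.

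\emph{Pointwise route.} Put $v(x,t):=\int_0^t (t-s)^{-\alpha}u_\lambda(x\mp(t-s),s)\,ds$. By definition of $u_\lambda$ the integrand is $(t-s)^{-\alpha}u(\lambda x\mp\lambda(t-s),\lambda s)$. Substitute $\sigma=\lambda s$, so that $t-s=(\lambda t-\sigma)/\lambda$ and $ds=d\sigma/\lambda$. This gives
\begin{equation}
v(x,t)=\lambda^{\alpha-1}\int_0^{\lambda t}(\lambda t-\sigma)^{-\alpha}u\bigl(\lambda x\mp(\lambda t-\sigma),\sigma\bigr)\,d\sigma=\lambda^{\alpha-1}w(\lambda x,\lambda t),
\end{equation}
where $w$ is the corresponding integral built from $u$. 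The chain rule gives $(\partial_t\pm\partial_x)[w(\lambda x,\lambda t)]=\lambda\,((\partial_t\pm\partial_x)w)(\lambda x,\lambda t)$. Dividing by $\Gamma(1-\alpha)$ and invoking \eqref{pointwiserepeq} once for $u_\lambda$ and once for $u$ yields the factor $\lambda^{\alpha-1}\cdot\lambda=\lambda^\alpha$, as claimed.

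\emph{Transform route.} With $\hat u(\xi,s)=\mathcal{F}\mathcal{L}\{u\}(\xi,s)$, the change of variables $y=\lambda x$, $\tau=\lambda t$ gives
\begin{equation}
\mathcal{F}\mathcal{L}\{u_\lambda\}(\xi,s)=\lambda^{-2}\hat u(\xi/\lambda,s/\lambda).
\end{equation}
Multiplying by the symbol and using the homogeneity $(s\mp i\xi)^\alpha=\lambda^\alpha\bigl(s/\lambda\mp i\xi/\lambda\bigr)^\alpha$, we get $\lambda^{\alpha}\cdot\lambda^{-2}\,\widehat{Du}(\xi/\lambda,s/\lambda)$, where $Du$ denotes the fractional material derivative of $u$. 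By the same dilation rule this is the transform of $\lambda^\alpha(Du)(\lambda x,\lambda t)$, and uniqueness of the Fourier--Laplace transform concludes.

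The main obstacle is bookkeeping rather than analysis. In the transform route, the homogeneity identity needs the principal branch of the power. This is legitimate because $\lambda>0$ and $\operatorname{Re}s>0$ keep $s\mp i\xi$ in the right half-plane, so the argument is unchanged by the positive dilation. In the pointwise route, the derivative $(\partial_t\pm\partial_x)$ may only exist in a weak or distributional sense. There we will note that the chain-rule identity for dilations holds equally for distributional derivatives, by testing against $\varphi(x/\lambda,t/\lambda)$. Dilation also preserves local integrability and exponential type, so $u_\lambda$ lies in the same class as $u$ and both sides are well defined.
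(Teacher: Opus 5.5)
Your pointwise route is correct and is essentially the paper's proof: the same substitution $\sigma=\lambda s$ in the representation \eqref{pointwiserepeq} yields the factor $\lambda^{\alpha-1}$ from the kernel and the measure, and differentiating along the dilated variables supplies the extra $\lambda$. Your Fourier--Laplace cross-check is also valid, since $(\lambda z)^\alpha=\lambda^\alpha z^\alpha$ for the principal branch follows from $\lambda>0$ alone; it has the small advantage of working directly from the multiplier definition \eqref{def:fracMatDer} and for every $\alpha>0$, not only $0<\alpha<1$.
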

\begin{proof}
Using pointwise representation \eqref{pointwiserepeq}, from direct calculations we have 
\begin{equation}
\begin{split}
	\left(\left(\dfrac{\partial}{\partial t} \pm \dfrac{\partial}{\partial x}\right)^{\alpha} u_{\lambda}\right)\left(x, t\right) &= \frac{1}{\Gamma\left(1-\alpha\right)}\left(\dfrac{\partial}{\partial t} \pm \dfrac{\partial}{\partial x}\right) \int_0^t \left(t-s\right)^{-\alpha} u_{\lambda}\left(x \mp \left(t-s\right), s\right)ds \\ 
	&= \frac{1}{\Gamma\left(1-\alpha\right)}\left(\dfrac{\partial}{\partial t} \pm \dfrac{\partial}{\partial x}\right) \int_0^t \left(t-s\right)^{-\alpha} u\left(\lambda \left(x \mp \left(t-s\right)\right), \lambda s\right)ds.
\end{split}
\end{equation}
Now, by a change of the variable $v = \lambda s$, we have
\begin{equation}
\begin{split}
	\left(\left(\dfrac{\partial}{\partial t} \pm \dfrac{\partial}{\partial x}\right)^{\alpha} u_{\lambda}\right)\left(x, t\right) &=\frac{1}{\Gamma\left(1-\alpha\right)}\left(\dfrac{\partial}{\partial t} \pm \dfrac{\partial}{\partial x}\right) \int_0^{\lambda t} \left(t-\frac{v}{\lambda}\right)^{-\alpha} u\left(\lambda \left(x \mp t\right)\pm v, v\right)\dfrac{dv}{\lambda} \\ 
	&= \frac{\lambda^{\alpha}}{\Gamma\left(1-\alpha\right)}\left(\dfrac{\partial}{\partial \lambda t} \pm \dfrac{\partial}{\partial \lambda x}\right) \int_0^{\lambda t} \left(\lambda t-v\right)^{-\alpha} u\left(\lambda \left(x \mp t\right)\pm v, v\right)dv \\ &= \lambda^{\alpha}\left(\left(\dfrac{\partial}{\partial t} \pm \dfrac{\partial}{\partial x}\right)^{\alpha} u\right)\left(\lambda x, \lambda t\right),
\end{split}
\end{equation}
which completes the proof. 
\end{proof}
Note how the pointwise representation helped to deliver the above result in a straightforward way. Next, we establish simple connection between fractional material derivative with $+$ and $-$.
\begin{proposition}\label{minusremark}
Let $u\left(\cdot, t\right) \in L_{loc}^1\left(\mathbb{R}\right)$ for all $t\in\mathbb{R}_+$ and $u\left(x,\cdot\right) \in L_{loc}^1\left(\mathbb{R}_+\right)$ for all $x\in\mathbb{R}$  and $y=-x$. Then
\begin{equation}
	\left(\dfrac{\partial}{\partial t} \pm \dfrac{\partial}{\partial x}\right)^{\alpha} u\left(x, t\right) = \left(\dfrac{\partial}{\partial t} \mp \dfrac{\partial}{\partial y}\right)^{\alpha} u\left(-y, t\right).
\end{equation}
\end{proposition}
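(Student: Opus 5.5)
We will prove the identity directly from the pointwise representation of Theorem \ref{pointwiserep}, in the same spirit as the scaling proposition above. Set $v(y,t) := u(-y,t)$. The hypotheses on $u$ transfer to $v$, because $y\mapsto -y$ maps compact sets to compact sets. Hence $v(\cdot,t)\in L^1_{loc}(\mathbb{R})$ for every $t$, $v(y,\cdot)\in L^1_{loc}(\mathbb{R}_+)$ for every $y$, and Theorem \ref{pointwiserep} applies to $v$. The claim is that the $(\partial_t \mp \partial_y)^\alpha$ derivative of $v$, evaluated at $(y,t)=(-x,t)$, equals the $(\partial_t \pm \partial_x)^\alpha$ derivative of $u$ at $(x,t)$.

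First we write the representation for $v$ with the opposite sign:
\begin{equation}
\left(\dfrac{\partial}{\partial t} \mp \dfrac{\partial}{\partial y}\right)^{\alpha} v(y,t) = \frac{1}{\Gamma(1-\alpha)}\left(\dfrac{\partial}{\partial t} \mp \dfrac{\partial}{\partial y}\right)\int_0^t (t-s)^{-\alpha} v\bigl(y \pm (t-s), s\bigr)\,ds .
\end{equation}
Substituting $v(z,s)=u(-z,s)$ turns the integrand into $(t-s)^{-\alpha}u\bigl(-y \mp (t-s), s\bigr)$. With $x=-y$ this is exactly $(t-s)^{-\alpha}u\bigl(x\mp(t-s),s\bigr)$, the integrand in the representation of $(\partial_t\pm\partial_x)^\alpha u(x,t)$. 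We therefore define $J(x,t):=\int_0^t (t-s)^{-\alpha}u(x\mp(t-s),s)\,ds$, and the integral above equals $J(-y,t)$.

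Next we transform the outer first-order operator by the chain rule. For $W(y,t) := J(-y,t)$ we have $\partial_y W = -(\partial_x J)(-y,t)$. Hence $(\partial_t \mp \partial_y)W = \bigl((\partial_t \pm \partial_x)J\bigr)(-y,t)$, which gives
\begin{equation}
\left(\dfrac{\partial}{\partial t} \mp \dfrac{\partial}{\partial y}\right)^{\alpha} u(-y,t) = \frac{1}{\Gamma(1-\alpha)}\Bigl(\bigl(\partial_t \pm \partial_x\bigr) J\Bigr)(x,t)\Big|_{x=-y} = \left(\dfrac{\partial}{\partial t} \pm \dfrac{\partial}{\partial x}\right)^{\alpha} u(x,t).
\end{equation}

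The only delicate point is the sense in which the outer derivative is taken, since under the stated assumptions it exists only in a weak or directional sense. We will handle this by noting that $\partial_t\pm\partial_x$ is a derivative along the characteristic direction $(\pm1,1)$, and that the reflection $x\mapsto -x$ maps this direction onto $(\mp 1,1)$ in the $(y,t)$ variables. The directional difference quotients therefore coincide exactly, and likewise the distributional derivatives, by a change of variables in the test-function pairing. As an independent check we can also compare Fourier–Laplace symbols: reflection sends $\xi\mapsto-\xi$, so the multiplier $(s\mp i\xi)^\alpha$ becomes $(s\pm i\xi)^\alpha$, which agrees with the identity for sufficiently regular $u$.
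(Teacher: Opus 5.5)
Your proposal is correct and follows essentially the same route as the paper: apply the pointwise representation of Theorem \ref{pointwiserep} to $u(-y,t)$, note that the integrand becomes $(t-s)^{-\alpha}u(x\mp(t-s),s)$ with $x=-y$, and use $\partial_y=-\partial_x$ to turn $\partial_t\mp\partial_y$ into $\partial_t\pm\partial_x$. Your extra remarks are sound additions that the paper leaves implicit: transferring the hypotheses to $v$, the weak sense of the outer derivative, and the Fourier--Laplace symbol check.
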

\begin{proof}
Using pointwise representation \eqref{pointwiserepeq}, from direct calculations we have 
\begin{equation}
	\begin{aligned}
		\left(\dfrac{\partial}{\partial t} \mp \dfrac{\partial}{\partial y}\right)^{\alpha} u\left(-y, t\right) = \frac{1}{\Gamma\left(1-\alpha\right)}\left(\dfrac{\partial}{\partial t} \mp \dfrac{\partial}{\partial y}\right) \int_0^t \left(t-s\right)^{-\alpha} u\left(-y \mp \left(t-s\right), s\right)ds = \\ = \frac{1}{\Gamma\left(1-\alpha\right)}\left(\dfrac{\partial}{\partial t} \pm \dfrac{\partial}{\partial x}\right) \int_0^t \left(t-s\right)^{-\alpha} u\left(x \mp \left(t-s\right), s\right)ds = \left(\dfrac{\partial}{\partial t} \pm \dfrac{\partial}{\partial x}\right)^{\alpha} u\left(x, t\right),
	\end{aligned}
\end{equation}
and the proof is complete. 
\end{proof}

After the above simple statements, we now proceed to the main analytical result of this paper. In the next theorem, we prove the existence and uniqueness of solutions to \eqref{combinationeq}.
\begin{theorem}\label{exuniq}
Let $f\left(\cdot, t\right) \in C_0\left(\mathbb{R}\right)$ for all $t\in\mathbb{R}_+$ and $f\left(x,\cdot\right) \in X\left(\mathbb{R}_+\right)$ for all $x\in\mathbb{R}$,  $g\in C_0\left(\mathbb{R}\right)$. Then there exists a unique mild solution to the problem \eqref{combinationeq}. Morevover it is give by the Duhamel formula
\begin{equation}\label{duhamel}
	u\left(x,t\right) = G_{t}(x)* g(x) + \int_0^t (G_{t-\tau}* f)\left(x,\tau\right)d\tau, 
\end{equation}
where $G_t\left(x\right) = \mathcal{F}^{-1}\left\{\mathcal{L}^{-1}\left\{\frac{1}{p\left(s- i\xi\right)^{\alpha} + \left(1-p\right)\left(s+ i\xi\right)^{\alpha}}\right\}\left(t\right)\right\}\left(x\right)$.
\begin{proof}
We split the proof into two parts. First, we assume that $f\left(\cdot, t\right) \in \mathcal{S}\left(\mathbb{R}\right)$ for all $t\in\mathbb{R}_+$ and $f\left(x,\cdot\right) \in X\left(\mathbb{R}_+\right)$ for all $x\in\mathbb{R}$ , $g\in \mathcal{S}\left(\mathbb{R}\right)$. Let us denote by $\hat{u}\left(\xi,t\right),\, \hat{f}\left(\xi,t\right),\, \hat{g}\left(\xi\right)$ Fourier transforms of $u, \, f, \, g$ in space. By $U\left(\xi, s\right), \, F\left(\xi, s\right)$ we denote the Fourier-Laplace transforms of $u, \, f$. It is known that the Laplace transform of the Riemann-Liouville derivative is given by
\begin{equation}
	\mathcal{L}\left\{D^{\alpha}f\left(t\right)\right\}\left(s\right) = s^{\alpha}\mathcal{L}\left\{f\left(t\right)\right\}\left(s\right) - \left[I_t^{1-\alpha}f\left(t\right)\right]_{t=0}.
\end{equation}
Using this fact, we calculate the Fourier-Laplace transform of the problem \eqref{combinationeq}
\begin{equation}
	p\left[\left(s- i\xi\right)^{\alpha}U\left(\xi, s\right) - \hat{g}\left(\xi\right)\right] + \left(1-p\right)\left[\left(s+ i\xi\right)^{\alpha}U\left(\xi, s\right) - \hat{g}\left(\xi\right)\right] = F\left(\xi,s\right),
\end{equation}
and after rearranging, we obtain
\begin{equation}\label{flteq}
	U\left(\xi, s\right) = \frac{F\left(\xi, s\right) + \hat{g}\left(\xi\right)}{p\left(s- i\xi\right)^{\alpha} + \left(1-p\right)\left(s+ i\xi\right)^{\alpha}}.
\end{equation}
Now, we will prove that we can invert the Laplace transform. We will use the Bromwich integral
\begin{equation}
	\mathcal{L}^{-1}\left\{U\left(\xi, s\right)\right\}\left(t\right) = \frac{1}{2\pi i}\int_{\gamma-i\infty}^{\gamma+i\infty}e^{st}U\left(\xi, s\right)ds,
\end{equation}
to show that the transforms can be inverted, yielding a unique (due to linearity) mild solution of our problem. This integral is well-defined when :
\begin{itemize}
    \item the Laplace transform is an analytic function when for $\Re\left(s\right)>\gamma$, for some $\gamma \in \mathbb{R}$,
    \item decays sufficiently fast as $\left|s\right|\to\infty$ in this region.
\end{itemize}
Since $f$ is exponential in time, its Laplace transform is analytic for $\Re\left(s\right)>a$ for some $a\in\mathbb{R}$. This property carries over to the Fourier transform; thus, $F\left(\xi, s\right)$ is $s$-analytic for $\Re\left(s\right)>a$. Moreover, $\hat{g}\left(\xi\right)$ is constant with respect to $s$, thus the numerator of \eqref{flteq} is analytic. The denominator is a combination of functions of the form $\left(s\pm i\xi\right)^{\alpha}$. The complex power function $z^{\alpha}$ is analytic everywhere except the non-positive real axis. Let $s=\sigma+i\omega$. For any $s$ in the right half-plane $\Re\left(s\right)=\sigma>0$, the real parts of $s\pm i\xi$ are equal to $\sigma$, which is positive. Thus, both components of the denominator are analytic for $\Re\left(s\right)>0$. 

Furthermore, the denominator does not have zeros for this region. To show this, let us assume that there exists a zero $s=s_0$ of the denominator. Then,
\begin{equation}
	\left(\frac{s_0-i\xi}{s_0+i\xi}\right)^\alpha = -\frac{1-p}{p}. 
\end{equation}
The right-hand side of this equality is a negative real number, so its argument is
\begin{equation}
	\arg\left(-\frac{1-p}{p}\right) = \left(2n+1\right)\pi, \quad n\in\mathbb{Z}.
\end{equation}
Let us denote $w = (s_0-i\xi)/(s_0+i\xi)$. Thus, taking the argument, we have
\begin{equation}
	\arg\left(w^{\alpha}\right) = \alpha\arg\left(w\right).
\end{equation}
Hence,
\begin{equation}
	\arg\left(w\right) = \frac{\left(2n+1\right)\pi}{\alpha}.
\end{equation}
Now, let us calculate the argument of $w$ directly. We have $s_0 = \sigma + i\omega$, $\sigma>0$ and
\begin{equation}
\arg\left(w\right)=\arg\left(\frac{s_0-i\xi}{s_0+i\xi}\right) = \arg\left(\sigma + i\left(\omega-\xi\right)\right) -  \arg\left(\sigma + i\left(\omega+\xi\right)\right) = \arctan\left(\frac{\omega - \xi}{\sigma}\right) - \arctan\left(\frac{\omega + \xi}{\sigma}\right).
\end{equation} Thus, 
\begin{equation}
	-\pi<\arg\left(w\right)<\pi,
\end{equation}
becasue the range of $\arctan$ is between $-\pi$ and $\pi$. Combining it all together, we have
\begin{equation}
	-\alpha<2n+1<\alpha.
\end{equation}
There is no integer $n$, which can satisfy this relation, thus the denominator is always nonzero. Hence, we conclude that $U\left(s, \xi\right)$, as the ratio of analytic functions with a zero-free denominator in the region $\Re\left(s\right)>\max\left\{0, a\right\}$, is analytic.

The above allows us to place the Bromwich contour $\gamma$ to the right of the line $\max\left\{0, a\right\}$. Next, we show that $U\left(\xi, s\right)$ decays sufficiently fast in order for the integral to be convergent. Again, let $s = \gamma+i\omega$. Then, we have
\begin{equation}
	p\left(s- i\xi\right)^{\alpha} + \left(1-p\right)\left(s+ i\xi\right)^{\alpha} = \left(i\omega\right)^{\alpha}\left[p\left(1+\frac{\gamma-i\xi}{i\omega}\right)^{\alpha} + \left(1-p\right)\left(1+\frac{\gamma+i\xi}{i\omega}\right)^{\alpha}\right] = \left(i\omega\right)^{\alpha}B\left(\xi,\omega\right).
\end{equation}
Taking the absolute value, we have
\begin{equation}
	\left|\left(i\omega\right)^{\alpha}B\left(\xi,\omega\right)\right|=\left|\omega\right|^{\alpha}\left|B\left(\xi,\omega\right)\right|.
\end{equation}
From the Taylor expansion for the function $\left(1+z\right)^{\alpha}$, we have
\begin{equation}\label{sigmadecay}
	\left|\omega\right|^{\alpha}\left|B\left(\xi,\omega\right)\right| = \left|\omega\right|^{\alpha}\left|1 + \mathcal{O}\left(\frac{1}{\omega}\right)\right|.
\end{equation}
Thus, the denominator is $\mathcal{O}\left(\left|s\right|\right)^{\alpha}$. For the numerator, we have
\begin{equation}
	\left|F\left(\xi, s\right) + \hat{g}\left(\xi\right)\right|\leq \left|F\left(\xi, s\right)\right| + \left|\hat{g}\left(\xi\right)\right|.
\end{equation}
For $\left|F\left(\xi, s\right)\right|$, we have
\begin{equation}
	\left|F\left(\xi, s\right)\right| = \left|\int_0^{\infty}e^{-st}\hat{f}\left(\xi,t\right)dt\right|\leq\int_0^{\infty}\left|e^{-st}\right|\left|\hat{f}\left(\xi,t\right)\right|dt.
\end{equation}
Since $s = \gamma+i\omega$, we have $\left|e^{-st}\right| = e^{-\gamma t}$. Since $f\left(x,\cdot\right)\in X\left(\mathbb{R}_+\right)$, we have
\begin{equation}
	\left|F\left(\xi, s\right)\right| \leq \int_0^{\infty}e^{-\gamma t}Me^{at} = \frac{M}{\gamma-a}.
\end{equation}
Therefore, $\left|F\left(\xi, s\right)\right|$ is bounded by a constant independent of $s$. Combining it all together, we have
\begin{equation}
	\left|U\left(\xi, s\right)\right| = \mathcal{O}\left(\left|s\right|^{-\alpha}\right) \quad \text{as} \quad \Re{s} \rightarrow \infty,
\end{equation}
which is the decay we need for the Bromwich inversion to be justified. Since $\hat{u}\left(\cdot, t\right)\in \mathcal{S}\left(\mathbb{R}\right)$, we can also invert the Fourier transform, thus there exists a unique mild solution to equation \eqref{combinationeq}.

Now, let us choose two sequences $g_n\in\mathcal{S}\left(\mathbb{R}\right)$ with $g_n\to g$ uniformly and $f_n\left(\cdot, t\right) \in C_0\left(\mathbb{R}\right)$ for all $t\in\mathbb{R}_+$ and $f_n\left(x,\cdot\right) \in X\left(\mathbb{R}_+\right)$ for all $x\in\mathbb{R}$ with $f_n\to f$ uniformly in $x$ and in the norm of space $X$ in $t$. Then for each $\left(f_n, g_n\right)$ there exists a unique solution, $u_n$, such that $u_n\left(\cdot, t\right) \in C_0\left(\mathbb{R}\right)$ for all $t\in\mathbb{R}_+$ and $u_n\left(x,\cdot\right) \in X\left(\mathbb{R}_+\right)$ for all $x\in\mathbb{R}$. $u_n$ can be represented as
\begin{equation}
	u_n\left(x,t\right) = \int_{\mathbb{R}}G_t\left(x-y\right)g_n\left(y\right)dy + \int_0^t (G_{t-\tau}* f_n)\left(x,\tau\right)d\tau, 
\end{equation}
where 
\begin{equation}\label{flg}
	G_t\left(x\right) = \mathcal{F}^{-1}\left\{\mathcal{L}^{-1}\left\{\frac{1}{\zeta\left(\xi, s\right)}\right\}\left(t\right)\right\}\left(x\right),
\end{equation}
where $\zeta\left(\xi, s\right) = p\left(s- i\xi\right)^{\alpha} + \left(1-p\right)\left(s+ i\xi\right)^{\alpha}$. Let us denote
\begin{equation}
	P\left(\xi, t\right)= \mathcal{L}^{-1}\left\{\frac{1}{\zeta\left(\xi, s\right)}\right\}\left(t\right) = \frac{1}{2\pi i }\int_{\gamma-i\infty}^{\gamma+i\infty}e^{st}\frac{1}{\zeta\left(\xi, s\right)}ds.
\end{equation}
Now, we change the variables $s = \lambda/t$, $\gamma = \gamma'/t$ to obtain
\begin{equation}\label{laplaceaux}
	P(\xi,t) = \frac{1}{2\pi i} \int_{\gamma' - i\infty}^{\gamma' + i\infty} e^{(\lambda/t)t} \frac{1}{\zeta(\xi, \lambda/t)} \frac{d\lambda}{t}.
\end{equation}
We also note that
\begin{equation}
	\zeta\left(\xi, \lambda/t\right) = p\left(\frac{\lambda}{t} - i\xi\right)^\alpha + \left(1-p\right)\left(\frac{\lambda}{t} + i\xi\right)^\alpha = \left(\frac{1}{t}\right)^\alpha \left[ p\left(\lambda - i\xi t\right)^\alpha + \left(1-p\right)\left(\lambda + i\xi t\right)^\alpha \right] = t^{-\alpha} \zeta\left(\xi t, \lambda\right),
\end{equation}
which together with \eqref{laplaceaux} yields
\begin{equation}
	P(\xi,t) = t^{\alpha-1}\left(\frac{1}{2\pi i}\int_{\gamma' - i\infty}^{\gamma' + i\infty} \frac{e^{\lambda}}{\zeta(\xi t, \lambda)}d\lambda\right)=t^{\alpha-1}{\mathcal{L}^{-1}\left\{\frac{1}{\zeta\left(\xi t, \lambda\right)}\right\}\left(1\right)}.
\end{equation}
By substituting $P$ into \eqref{flg}, we obtain
\begin{equation}
	G_t\left(x\right) = \mathcal{F}^{-1}\left\{P(\xi,t)\right\} = \frac{1}{2\pi}\int_{\mathbb{R}}e^{i\xi x}t^{\alpha-1}{\mathcal{L}^{-1}\left\{\frac{1}{\zeta\left(\xi t, \lambda\right)}\right\}\left(1\right)}d\xi.
\end{equation}
Now, we change the variables $y=\xi t$. We have,
\begin{equation}
\begin{split}
    G_t(x) &= t^{\alpha-1} \frac{1}{2\pi} \int_{-\infty}^{\infty} e^{i(y/t)x} \left( \mathcal{L}^{-1}\left\{\frac{1}{\zeta(y, \lambda)}\right\}(1) \right) \frac{dy}{t} = t^{\alpha-2} \left( \frac{1}{2\pi} \int_{-\infty}^{\infty} e^{iy(x/t)} \left( \mathcal{L}^{-1}\left\{\frac{1}{\zeta(y, \lambda)}\right\}(1) \right) dy \right) \\ 
    &= t^{\alpha-2}\mathcal{F}^{-1}\left\{\mathcal{L}^{-1}\left\{\frac{1}{\zeta\left(y, \lambda\right)}\right\}\left(1\right)\right\}\left(
    \frac{x}{t}\right) = t^{\alpha-2}G_1\left(\frac{x}{t}\right).
\end{split}
\end{equation}
Next, we estimate the supremum norm of $u_n$. Using the integral representation of the solution, we have
\begin{equation}
	\left\|u_n\left(\cdot, t\right)\right\|_{\infty} \leq\left\|\int_{\mathbb{R}}G_t\left(x-y\right)g_n\left(y\right)dy \right\|_{\infty} + \left\|\int_0^tG_{t-\tau}\left(\cdot\right)*f_n\left(\cdot,\tau\right)d\tau\right\|_{\infty}
\end{equation}
By Young's convolution inequality $\left\|f*g\right\|_{L^r}\leq\left\|f\right\|_{L^p}\left\|g\right\|_{L^q}$ for $1/p + 1/q = 1 + 1/r$, $1\leq p,q,r\leq\infty$ (we can apply it, since the $C_0 \subset L^{\infty}$) and the scaling property, we have
\begin{equation}
	\left\|\int_{\mathbb{R}}G_t\left(x-y\right)g_n\left(y\right)dy \right\|_{\infty} \leq \left\|G_t\right\|_{L^1}\left\|g_n\right\|_{\infty} = t^{\alpha-1} \|G_1\|_{L^1}\left\|g_n\right\|_{\infty}.
\end{equation}
Similarly, for the second integral
\begin{equation}
\begin{split}
    &\left\|\int_0^tG_{t-\tau}\left(\cdot\right)*f_n\left(\cdot,\tau\right)d\tau\right\|_{\infty}\leq\int_0^t\left\|G_{t-\tau}\left(\cdot\right)*f_n\left(\cdot,\tau\right)\right\|_{\infty} d\tau \\ 
    &\leq\int_0^t\left\|G_{t-\tau}\left(\cdot\right)\right\|_{L^1}\left\|f_n\left(\cdot,\tau\right)\right\|_{\infty} d\tau = \|G_1\|_{L^1}\int_0^t\left(t-\tau\right)^{\alpha-1}\left\|f_n\left(\cdot,\tau\right)\right\|_{\infty} d\tau
\end{split}
\end{equation}
We have to show that $\|G_1\|_{L^1}<\infty$. To this end, write 
\begin{equation}
	P\left(\xi\right) = \mathcal{L}_{\lambda}^{-1}\left\{\frac{1}{\zeta\left(\xi, \lambda\right)}\right\}\left(1\right) = \frac{1}{2\pi i}\int_{\gamma-i\infty}^{\gamma+i\infty}\frac{e^{\lambda}}{\zeta\left(\xi, \lambda\right)}d\lambda.
\end{equation} 
Notice that $P\left(\xi\right)$ is smooth since $\zeta\left(\xi, \lambda\right)$ is differentiable and positive on the integration path $\gamma$. 

Now, we need to show that we can switch derivative and integral. We can do this using the Leibniz rule.
Let $\lambda = \gamma + i\eta$. Then, we have
\begin{equation}
	\left| \dfrac{\partial^n}{\partial \xi^n}\left(\frac{1}{\zeta}\right) \right| = \mathcal{O}\left(\left|\eta\right|^{-\left(\alpha+n\right)}\right)
\end{equation}
To show this, we proceed by induction. For $n=0$, it follows from \eqref{sigmadecay}. Now, let us assume that the induction hypothesis is true for all $j=0, 1, \ldots, n-1$. Then, for $n$, we have
\begin{equation}\label{derivbehav}
	\left|\dfrac{\partial^n}{\partial \xi^n}\left(\frac{1}{\zeta}\right)\right| = \left|-\frac{1}{\zeta}\right| \cdot \left| \sum_{j=1}^{n} \binom{n}{j} \dfrac{\partial^j \zeta}{\partial \xi^j} d\frac{\partial^{n-j}}{\partial \xi^{n-j}}\left(\frac{1}{\zeta}\right) \right| = \mathcal{O}\left(\left|\eta\right|^{-\alpha}\right)\mathcal{O}\left(\left|\eta\right|^{-n}\right)=\mathcal{O}\left(\left|\eta\right|^{-\left(\alpha+n\right)}\right).
\end{equation}
Thus, the induction hypothesis is true. 

Hence, we can differentiate $P\left(\xi\right)$ under the integral sign and it is a smooth function. Now, let us rewrite $P\left(\xi\right)$ in the following way
\begin{equation}
	P\left(\xi\right) = P\left(\xi\right)\phi\left(\xi\right) + P\left(\xi\right)\left(1-\phi\left(\xi\right)\right),
\end{equation}
where $\phi\left(\xi\right)$ is the cut-off function, that is, a smooth function equal to $1$ for $\left|\xi\right|\leq1$ and equal to $0$ for $\left|\xi\right|\geq2$. Let $P_{low}\left(\xi\right) = P\left(\xi\right)\phi\left(\xi\right )$, $P_{high}\left(\xi\right) = P\left(\xi\right)\left(1-\phi\left(\xi\right)\right)$,
\begin{equation}
	G_{low}\left(x\right) = \frac{1}{2\pi}\int_{\mathbb{R}}e^{i\xi x}P_{low}\left(\xi\right)d\xi,
\end{equation}
and
\begin{equation}
	G_{high}\left(x\right) = \frac{1}{2\pi}\int_{\mathbb{R}}e^{i\xi x}P_{high}\left(\xi\right)d\xi.
\end{equation}
Using integration by parts twice, we have
\begin{equation}
	G_{low}\left(x\right) = \frac{1}{2\pi x^2}\int_{\mathbb{R}}e^{i\xi x}\left(\dfrac{d^2}{d\xi^2}P_{low}\left(\xi\right)\right)d\xi.
\end{equation}
Taking the absolute value on both sides, we obtain
\begin{equation}
	\left|G_{low}\left(x\right)\right|\leq\frac{1}{2\pi x^2}\int_{\mathbb{R}}\left|\dfrac{d^2}{d\xi^2}P_{low}\left(\xi\right)\right|d\xi.
\end{equation}
Since $P_{low}$ is smooth on a compact interval and all its
derivatives are bounded and the integral on the right-hand side is a finite constant $C_1$. Thus, we have
\begin{equation}
	\left|G_{low}\left(x\right)\right| \leq \frac{C_1}{x^2}.
\end{equation}
Thus, $\left\|G_{low}\right\|_{L_1}<\infty$. For $G_{high}$, again integrating by parts twice, we have
\begin{equation}
	G_{high}\left(x\right) = \frac{1}{2\pi x^2}\int_{\mathbb{R}}e^{i\xi x}\left(\dfrac{d^2}{d\xi^2}P_{high}\left(\xi\right)\right)d\xi.
\end{equation}
Taking the absolute value on both sides, we obtain
\begin{equation}
	\left|G_{high}\left(x\right)\right|\leq\frac{1}{2\pi x^2}\int_{\mathbb{R}}\left|\dfrac{d^2}{d\xi^2}P_{high}\left(\xi\right)\right|d\xi.
\end{equation}
From \eqref{derivbehav} we know that $\left|\dfrac{d^2}{d\xi^2}P_{high}\left(\xi\right)\right|$ behaves like $\left|\xi\right|^{-\left(\alpha+2\right)}$ at infinity, thus the right-hand side of the above inequality converges, and we have
\begin{equation}
	\left|G_{high}\left(x\right)\right| \leq \frac{C_2}{x^2}.
\end{equation}
Finally, since $G_1\left(x\right) = G_{low}\left(x\right) + G_{high}\left(x\right)$, we have
\begin{equation}
	\left\|G_1\right\|_{L^1}\leq \left\|G_{low}\right\|_{L_1} + \left\|G_{high}\right\|_{L_1} < \infty.
\end{equation}
Let us denote $C = \left\|G_1\right\|_{L^1}$. Then, we have
\begin{equation}
	\left\|u_n\left(\cdot, t\right)\right\|_{\infty}\leq Ct^{\alpha-1} \left\|g_n\right\|_{\infty} 
	+ C\int_0^t\left(t-\tau\right)^{\alpha-1}\left\|f_n\left(\cdot,\tau\right)\right\|_{\infty} d\tau.
\end{equation}
Having this estimate, we can prove that $u_n\left(\cdot, t\right)$ is the Cauchy sequence. That is, we have
\begin{equation}
	\left\|u_n\left(\cdot, t\right) - u_m\left(\cdot, t\right)\right\|_{\infty}\leq Ct^{\alpha-1} \left\|g_n-g_m\right\|_{\infty} 
	+ C\int_0^t\left(t-\tau\right)^{\alpha-1}\left\|f_n\left(\cdot,\tau\right)-f_m\left(\cdot,\tau\right)\right\|_{\infty} d\tau.
\end{equation}
Since $g_n$ and $f_n$ are Cauchy sequences, the right-hand side of the above inequality converges to $0$. Thus, left-hand side also converges to zero and $u_n$ is a Cauchy sequence. Since $\mathcal{S}$ is dense in $C_0$, $u_n\to u$ is uniformly in space. Hence, there exists a unique solution $u$, such that $u\left(\cdot, t\right) \in C_0\left(\mathbb{R}\right)$ for all $t\in\mathbb{R}_+$ and $u\left(x,\cdot\right) \in X\left(\mathbb{R}_+\right)$ for all $x\in\mathbb{R}$. 
\end{proof}    
\end{theorem}
Before, we state next theorem, we will prove important fact about kernel $G_t$.
\begin{proposition}
    Let $G_t\left(x\right) = \mathcal{F}^{-1}\left\{\mathcal{L}^{-1}\left\{\frac{1}{p\left(s- i\xi\right)^{\alpha} + \left(1-p\right)\left(s+ i\xi\right)^{\alpha}}\right\}\left(t\right)\right\}\left(x\right)$. Then,
    \begin{equation}
        \int_{-\infty}^{\infty}G_t\left(x\right)dx=\frac{t^{\alpha-1}}{\Gamma\left(\alpha\right)}.
    \end{equation}
    \begin{proof}
        By the definition of $G_t$, we have
    \begin{equation}
        G_t\left(x\right) = \frac{1}{2\pi}\int_{-\infty}^{\infty}e^{i\xi x}\mathcal{L}^{-1}\left\{\frac{1}{p\left(s- i\xi\right)^{\alpha} + \left(1-p\right)\left(s+ i\xi\right)^{\alpha}}\right\}\left(t\right)d\xi = \frac{1}{2\pi}\int_{-\infty}^{\infty}e^{i\xi x}P\left(\xi, t\right)d\xi.
    \end{equation}
    Integrating over space, we obtain
\begin{equation}
    \begin{aligned}
        \int_{-\infty}^{\infty}G_t\left(x\right)dx = \frac{1}{2\pi}\int_{-\infty}^{\infty}\int_{-\infty}^{\infty}e^{i\xi x}P\left(\xi, t\right)d\xi dx = \int_{-\infty}^{\infty}P\left(\xi, t\right)\left[\frac{1}{2\pi}\int_{-\infty}^{\infty}e^{i\xi x}dx\right] d\xi = \\ \int_{-\infty}^{\infty}P\left(\xi, t\right)\delta\left(\xi\right) d\xi = P\left(0, t\right).
    \end{aligned}
    \end{equation}
    Thus
    \begin{equation}
    \begin{aligned}
        \int_{-\infty}^{\infty}G_t\left(x\right)dx = P\left(0, t\right) = \mathcal{L}^{-1}\left\{\frac{1}{p\left(s- i0\right)^{\alpha} + \left(1-p\right)\left(s+ i0\right)^{\alpha}}\right\}\left(t\right) = \mathcal{L}^{-1}\left\{\frac{1}{s^{\alpha}}\right\}\left(t\right) = \frac{t^{\alpha-1}}{\Gamma\left(\alpha\right)}.
    \end{aligned}
    \end{equation}
    \end{proof}
\end{proposition}
Since we are interested in connections between \eqref{combinationeq} and the corresponding stochastic process in the next proposition, we find the necessary and sufficient condition on the source $f$ for the solution to be the probability density function for all times.
\begin{proposition}\label{probconsprop}
Let $u\left(x, t\right)$ be the solution to \eqref{combinationeq}, vanishing for $x\rightarrow\pm \infty$. Moreover, assume that $f\left(\cdot, t\right) \in L^1\left(\mathbb{R}\right)$ for all $t\in\mathbb{R}_+$ and $f\left(x,\cdot\right) \in L_{loc}^1\left(\mathbb{R}_+\right)$ for all $x\in\mathbb{R}$. Then, solution $u$ to the problem \eqref{combinationeq} is a probability density function, i.e.
 \begin{equation}\label{densitycond}
     \int_{-\infty}^{\infty}u(x,t)dx=1.
 \end{equation}
 and
 \begin{equation}
     \lim_{t\to0^+}u(x,t)=\delta(x)
 \end{equation}
 if only if
\begin{equation}\label{pdfcond}
	\int_{-\infty}^{\infty}f\left(x, t\right)dx = \frac{t^{-\alpha}}{\Gamma\left(1-\alpha\right)}.
\end{equation}
Moreover, the solution is given by
\begin{equation}\label{duhamel2}
    u\left(x,t\right) = \int_0^t (G_{t-\tau}* f)\left(x,\tau\right)d\tau.
\end{equation}
\begin{proof}
Using the fact that $L^1$ space is dense in Schwartz space, following the same reasoning as in proof of Theorem \ref{exuniq}, we obtain that $u$ is the solution to \eqref{combinationeq}. Let us assume that condition \eqref{densitycond} holds. Integrating the equation Duhamel formula \eqref{duhamel2} over $\mathbb{R}$ with respect to $x$ we have
\begin{equation}\label{eqn:MainEqIntegrated}
\begin{split}
	1 =  \int_{-\infty}^{\infty}\int_0^t \int_{-\infty}^{\infty} G_{t-\tau}\left(x-y\right)f\left(y,\tau\right)dyd\tau dx
\end{split}
\end{equation}
Using the fact, that $\int_{-\infty}^{\infty}G_t\left(x\right)dx = \frac{t^{\alpha-1}}{\Gamma\left(\alpha\right)}$, we have
\begin{equation}\label{eqn:MainEqIntegrated}
\begin{split}
	1 = \frac{1}{\Gamma\left(\alpha\right)}\int_0^t \int_{-\infty}^{\infty} \left(t-\tau\right)^{\alpha-1}f\left(y,\tau\right)dyd\tau = I^{1-\alpha}_t\int_{-\infty}^{\infty}f\left(x,t\right)dx.
\end{split}
\end{equation}
Now, assume that condition \eqref{pdfcond} holds. One more time integrating the Duhamel formula in $\mathbb{R}$ with respect to $x$, we have
\begin{equation}\label{eqn:MainEqIntegrated}
\begin{split}
	\int_{-\infty}^{\infty}u\left(x,t\right)dx =  \int_{-\infty}^{\infty}\int_0^t \int_{-\infty}^{\infty} G_{t-\tau}\left(x-y\right)f\left(y,\tau\right)dyd\tau dx.
\end{split}
\end{equation}
Again, using the fact that $\int_{-\infty}^{\infty}G_t\left(x\right)dx = \frac{t^{\alpha-1}}{\Gamma\left(\alpha\right)}$, we have
\begin{equation}
\begin{split}
	\int_{-\infty}^{\infty}u\left(x,t\right)dx = \frac{1}{\Gamma\left(1-\alpha\right)\Gamma\left(\alpha\right)}\int_0^t \left(t-\tau\right)^{\alpha-1}\tau^{-\alpha}d\tau = 1.
\end{split}
\end{equation}
We are left with showing the convergence to the Dirac delta. To this end, take any take any $\psi\in C_c^{\infty}\left(\mathbb{R}\right)$, multiply it by \eqref{duhamel2}, and integrate to arrive at
       \begin{equation}\label{weakformulation2}
           \int_{-\infty}^{\infty}u\left(x,t\right)\psi\left(x\right)dx = \int_{-\infty}^{\infty}\psi\left(x\right)\left(\int_0^t (G_{t-\tau}* f)\left(x,\tau\right)d\tau\right)dx.
       \end{equation}
       Furthermore, by adding and subtracting $\psi\left(0\right)$ using Fubini's theorem, the fact, that $\int_{-\infty}^{\infty}G_t\left(x\right)dx = \frac{t^{\alpha-1}}{\Gamma\left(\alpha\right)}$, using \eqref{pdfcond} and utilizing definition of beta fucntion, we obtain
       \begin{equation}
           \begin{aligned}
               \int_{-\infty}^{\infty}\psi\left(x\right)\left(\int_0^t (G_{t-\tau}* f)\left(x,\tau\right)d\tau\right)dx = \int_0^t\int_{-\infty}^{\infty}f(y,\tau)(G_{t-\tau}* (\psi-\psi(0)))\left(y,\tau\right)dyd\tau + \\ \psi(0)\int_0^t\int_{-\infty}^{\infty}f(y,\tau)\left(\int_{-\infty}^{\infty}G_{t-\tau}(x-y)dx\right)dyd\tau = \psi(0)+\int_0^t\int_{-\infty}^{\infty}f(y,\tau)(G_{t-\tau}* (\psi-\psi(0)))\left(y,\tau\right)dyd\tau
           \end{aligned}
       \end{equation}
       Hence, we are left in showing that the double integral on the right-hand side vanishes as $t\to0^+$.  To see this, change the time variable $\tau=ts$, $y=tv$, $z=t(1-s)w$ and use scaling of kernel $G_t(x) = t^{\alpha-2}G_1(x/t)$ to have
       \begin{equation}
           \begin{aligned}
               \left|\int_0^t\int_{-\infty}^{\infty}\int_{-\infty}^{\infty}f(y,\tau)G_{t-\tau}(z)(\psi(y-z)-\psi(0))dzdyd\tau\right|\\ \leq t^{\alpha+1}\int_0^1(1-s)^{\alpha-1}\int_{-\infty}^{\infty}\int_{-\infty}^{\infty}f(tv, ts)G_1(w)|\psi(t(v-(1-s)w))-\psi(0)|dwdvds\\ \leq \sup_{\xi\in \mathbb{R}}\left|\psi\left(t\xi\right)-\psi\left(0\right)\right|t^{\alpha+1}\int_0^1(1-s)^{\alpha-1}\int_{-\infty}^{\infty}\int_{-\infty}^{\infty}f(tv, ts)G_1(w)dwdvds
           \end{aligned}
       \end{equation}
       Now, using fact, thats space integral of kernel $G_t$ is equal to $t^{\alpha-1}/\Gamma(\alpha)$ and since condition \eqref{pdfcond} is satisfied we obtain
       \begin{equation}
           \begin{aligned}
               \left|\int_0^t\int_{-\infty}^{\infty}\int_{-\infty}^{\infty}f(y,\tau)G_{t-\tau}(z)(\psi(y-z)-\psi(0))dzdyd\tau\right|\\ \leq \sup_{\xi\in \mathbb{R}}\left|\psi\left(t\xi\right)-\psi\left(0\right)\right|\frac{t^{\alpha+1}}{\Gamma\left(\alpha\right)}\int_0^1\left(1-s\right)^{\alpha-1}\left(\int_{-\infty}^{\infty}f\left(tv,ts\right)dv\right)ds \\ = t\sup_{\xi\in \mathbb{R}}\left|\psi\left(t\xi\right)-\psi\left(0\right)\right|\frac{1}{\Gamma\left(\alpha\right)\Gamma\left(1-\alpha\right)}\int_0^1\left(1-s\right)^{\alpha-1}s^{-\alpha}dz = t\sup_{\xi \in \mathbb{R}}\left|\psi\left(t\xi\right)-\psi\left(0\right)\right|
           \end{aligned}
       \end{equation}
       By uniform convergence of $\psi$ the right-hand side of the above goes to $0$ as $t\to0^+$.
\end{proof}
\end{proposition}

It is interesting that there exist several types of L\'evy walks that yield solutions $u=u(x,t)$ in an exact, analytical form. These explicit formulas were derived in \cite{Magdziarz2020} using probabilistic methods.  

\begin{example}\label{wfex} For the \emph{wait first} L\'evy walk, we have \cite{MagdziarzTeuerle2015}
\begin{equation}
\begin{cases}
	p\left(\dfrac{\partial}{\partial t} - \dfrac{\partial}{\partial x}\right)^{\alpha} u\left(x, t\right) + \left(1-p\right)\left(\dfrac{\partial}{\partial t} + \dfrac{\partial}{\partial x}\right)^{\alpha} u\left(x, t\right)  = \dfrac{t^{-\alpha}}{\Gamma\left(1-\alpha\right)}\delta\left(x\right) \vspace{2pt}\\ 
	\lim_{t \to 0^{+}}u\left(x, t\right) = \delta\left(x\right).
\end{cases}
\end{equation}
The solution has the explicit self-similar form $u\left(x,t\right) = \frac{1}{t}\phi\left(\frac{x}{t}\right)$, where
\begin{equation}
\phi\left(y\right) = 
\begin{cases}
	\dfrac{p\sin \left(\alpha \pi\right)}{\pi}\dfrac{\left(1-y\right)^{\alpha}y^{-1+\alpha}}{p^2\left(1-y\right)^{2\alpha} + \left(1-p\right)^2\left(1+y\right)^{2\alpha}+2p\left(1-p\right)\left(1-y^2\right)^{\alpha}\cos\left(\alpha \pi\right)}, & 0<y<1, \vspace{2pt}\\ 
	\dfrac{\left(1-p\right)\sin \left(\alpha \pi\right)}{\pi}\dfrac{\left(1+y\right)^{\alpha}\left(-y\right)^{-1+\alpha}}{\left(1-p\right)^2\left(1-y\right)^{2\alpha} + p^2\left(1+y\right)^{2\alpha}+2p\left(1-p\right)\left(1-y^2\right)^{\alpha}\cos\left(\alpha \pi\right)}, & -1<y<0, \vspace{2pt}\\
	0, \text{otherwise}.
\end{cases}
\end{equation}
\end{example}

\begin{example} In the case of the \emph{jump first} L\'evy walk \cite{Teuerle2012}, the main equation has the form \cite{MagdziarzTeuerle2015}
\begin{equation}
\begin{split}
\begin{cases}
	p\left(\dfrac{\partial}{\partial t} - \dfrac{\partial}{\partial x}\right)^{\alpha} u\left(x, t\right) + \left(1-p\right)&\left(\dfrac{\partial}{\partial t} + \dfrac{\partial}{\partial x}\right)^{\alpha} u\left(x, t\right) \\ &=\dfrac{\alpha}{\Gamma\left(1-\alpha\right)} \displaystyle{\int_t^{\infty} \left(p\delta\left(x+u\right) + \left(1-p\right)\delta\left(x-u\right)\right) u^{-\alpha-1}du}\\ 
	\lim_{t \to 0^{+}}u\left(x, t\right) = \delta\left(x\right).
\end{cases}
\end{split}
\end{equation}
with the solution $u\left(x,t\right) = \frac{1}{t}\phi\left(\frac{x}{t}\right)$, where
\begin{equation}
\phi\left(y\right) = 
\begin{cases}
	\dfrac{p\sin \left(\alpha \pi\right)}{\pi y}\dfrac{1}{p\left(y-1\right)^{\alpha} + \left(1-p\right)\left(y+1\right)^{\alpha}}, & y > 1, \vspace{2pt}\\ 
	\dfrac{\left(1-p\right)\sin \left(\alpha \pi\right)}{-\pi y}\dfrac{1}{\left(1-p\right)\left(-y-1\right)^{\alpha} + p\left(-y+1\right)^{\alpha}}, & y < -1, \vspace{2pt}\\
	\dfrac{p\left(1-p\right)\sin \left(\alpha \pi\right)}{\pi\left|y\right|}\dfrac{\left|\left(1+y\right)^{\alpha} - \left(1-y\right)^{\alpha}\right|}{p^2\left(1-y\right)^{2\alpha} + \left(1-p\right)^2\left(1+y\right)^{2\alpha}+2p\left(1-p\right)\left(1-y^2\right)^{\alpha}\cos\left(\alpha \pi\right)}, & \text{ otherwise}.
\end{cases}
\end{equation}
	
\end{example}
\begin{example} Now let us consider standard Levy walk. Then, the governing equation is of the form\cite{MagdziarzEtAl2015LevyWalks}
\begin{equation}
\begin{cases}
	p\left(\dfrac{\partial}{\partial t} - \dfrac{\partial}{\partial x}\right)^{\alpha} u\left(x, t\right) + \left(1-p\right)\left(\dfrac{\partial}{\partial t} + \dfrac{\partial}{\partial x}\right)^{\alpha} u\left(x, t\right)  = \dfrac{t^{-\alpha}}{\Gamma\left(1-\alpha\right)}\left[p\delta\left(x+t\right) + \left(1-p\right)\delta\left(x-t\right)\right] \vspace{2pt}\\ 
	\lim_{t \to 0^{+}}u\left(x, t\right) = \delta\left(x\right).
\end{cases}
\end{equation}
and the solution has the explicit form $u\left(x,t\right) = \frac{1}{t}\phi\left(\frac{x}{t}\right)$, where
\begin{equation}
	\phi\left(y\right) = 
	\frac{p\left(1-p\right)\sin \left(\alpha \pi\right)}{\pi }\frac{\left(1-y\right)^{\alpha-1}\left(1+y\right)^{\alpha} +\left(1+y\right)^{\alpha-1}\left(1-y\right)^{\alpha}}{p^2\left(1-y\right)^{2\alpha} + \left(1-p\right)^2\left(1+y\right)^{2\alpha}+2p\left(1-p\right)\left(1-y^2\right)^{\alpha}\cos\left(\alpha \pi\right)}H\left(t-\left|x\right|\right),
\end{equation}
with $H=H(t)$ being the Heaviside step function. 
\end{example}

\section{Numerical method}
In this section, we devise a numerical method to solve \eqref{combinationeq}. We will use the method obtained in \cite{Plociniczak2024} for the equation with a fractional material derivative. For completeness, we recall the derivation of this scheme. For the time- and space-mesh step $h>0$ (since time and space are treated on equal footing, this is a reasonable assumption), we define
\begin{equation}
	x_i = ih, \quad t_i = nh, \quad i\in \mathbb{Z}, \quad n \in \mathbb{N}
\end{equation}
and we partition the real line into finite cells (volumes)
\begin{equation}
	\mathbb{R} = \bigcup_{i=-\infty}^{\infty} \left(x_{i-\frac{1}{2}}, x_{i + \frac{1}{2}}\right], \quad x_{i\pm \frac{1}{2}} = \left(i\pm \frac{1}{2}\right)h.
\end{equation}
Now, we integrate Equation \eqref{pointwiserepeq} into an arbitrary cell. We have
\begin{equation}
\begin{split}
    \frac{1}{\Gamma\left(1-\alpha\right)}&\dfrac{d}{d t}\int_0^t \left(t-s\right)^{-\alpha} \left(\int_{x_{i-\frac{1}{2}} \mp\left(t-s\right)}^{x_{i+\frac{1}{2}} \mp \left(t-s\right)}u\left(y, s\right)dy\right)ds  \\ 
    &- \frac{1}{\Gamma\left(1-\alpha\right)}\int_0^t \left(t-s\right)^{-\alpha} \left(u\left(x_{i + \frac{1}{2}} \mp \left(t-s\right), s\right) - u\left(x_{i - \frac{1}{2}} \mp \left(t-s\right), s\right)\right)ds \\  &= \int_{x_{i - \frac{1}{2}}}^{x_{i + \frac{1}{2}}}f\left(x, t\right)dx
\end{split}
\end{equation}
We divide the equation by $h$ and define the cell-average
\begin{equation}
	\bar{u}\left(x_i, t\right) = \frac{1}{h}\int_{x_i - \frac{1}{2}}^{x_i + \frac{1}{2}} u\left(x, t\right)dx.
\end{equation}
Then we have
\begin{equation}\label{discretizedconslaw}
\begin{split}
    \frac{1}{\Gamma\left(1-\alpha\right)}&\dfrac{d}{d t}\int_0^t \left(t-s\right)^{-\alpha}\bar{u}\left(x_i \mp \left(t-s\right), s\right)ds \\ 
    &-  \frac{1}{\Gamma\left(1-\alpha\right)}\int_0^t \left(t-s\right)^{-\alpha} \frac
    {u\left(x_{i + \frac{1}{2}} \mp \left(t-s\right), s\right) - u\left(x_{i - \frac{1}{2}} \mp \left(t-s\right), s\right)}{h}ds  =  \bar{f}\left(x_i, t\right).
\end{split}
\end{equation}
Now, we discretize the temporal variable using the L1 scheme, that is, we approximate the unknown function by a linear interpolation in time, and integrate (see \cite{plociniczak2023linear} for a complete account). Thus, 
\begin{equation}
\begin{split}
    \frac{1}{\Gamma\left(1-\alpha\right)}\int_0^{t_n}& \left(t_n-s\right)^{-\alpha} \bar{u}\left(x_i \mp\left(t_n-s\right), s\right)ds = \frac{1}{\Gamma\left(1-\alpha\right)}\sum_{j=0}^{n-1}\int_{t_j}^{t_{j+1}} \left(t_n-s\right)^{-\alpha} \bar{u}\left(x_i \mp\left(t_n-s\right), s\right)ds \\ 
    &= \frac{h^{1-\alpha}}{\Gamma\left(2-\alpha\right)}\sum_{j=0}^{n-1}b_{n-j}u^j_{i\mp\left(n-j\right)} + r_{in}\left(h\right),
\end{split}
\end{equation}
where $r_{in}\left(h\right)$ is a remainder, $u_i^k := \bar{u}\left(x_i, t_k\right)$ and 
\begin{equation}\label{bdef}
	b_k = k^{1-\alpha} - \left(k-1\right)^{1-\alpha}.
\end{equation}
Therefore, 
\begin{equation}
\begin{split}
	\frac{1}{\Gamma\left(1-\alpha\right)}\dfrac{d}{dt}&\int_0^{t_n} \left(t_n-s\right)^{-\alpha} \bar{u}\left(x_i \mp\left(t_n-s\right), s\right)ds \\
    &= \frac{h^{-\alpha}}{\Gamma\left(2-\alpha\right)}\left(u_{i \mp1}^n+\sum_{j=0}^{n-1}b_{n-j+1}u^j_{i\mp\left(n-j+1\right)}-b_{n-j}u^j_{i\mp\left(n-j\right)}\right) + R_{in}^1\left(h\right),
\end{split}
\end{equation}
where $R_{im}^1\left(h\right)$ is a remainder and for sufficiently smooth functions, say twice differentiable this approximation is of order $2-\alpha$, that is remainder satisfies \cite{LiZeng2015NumericalMethodsFractionalCalculus}
\begin{equation}
    \left|R_{im}^1\right|\leq Ch^{2-\alpha}.
\end{equation}
Now, we can proceed to discretization of the spatial variable. We assume that the pointwise value of $u$ can be approximated by its cell average, that is,
\begin{equation}
	\bar{u}\left(x_i, t\right) \approx \begin{cases}
		u\left(x_{i + \frac{1}{2}}, t\right), \quad \text{in the "+" case,}\vspace{2pt}\\
		u\left(x_{i - \frac{1}{2}}, t\right), \quad \text{in the "-" case,}
	\end{cases}
\end{equation}
which is accurate with $O\left(h\right)$ as $h\to 0^+$. Thus, the discretization of the spatial part is
\begin{equation}
	\begin{aligned}
		\frac{h^{-1}}{\Gamma\left(1-\alpha\right)}\int_0^t \left(t-s\right)^{-\alpha} \left(u\left(x_{i + \frac{1}{2}} \mp \left(t-s\right), s\right) - u\left(x_{i - \frac{1}{2}} \mp \left(t-s\right), s\right)\right)ds = \\ = \frac{h^{-\alpha}}{\Gamma\left(2-\alpha\right)}\left(\sum_{j=0}^{n-1}b_{n-j}\left(u^j_{i\mp\left(n-j\right)}-u^j_{i\mp\left(n-j+1\right)}\right)\right) + R_{in}^2\left(h\right)
	\end{aligned}
\end{equation}
Truncating the remainders and combining all the above formulas together, we obtain the discretization form of \eqref{discretizedconslaw}
\begin{equation}\label{eqn:MatDerDisc}
	\delta^{\alpha}_{\pm}u_i^n = \frac{h^{-\alpha}}{\Gamma\left(2-\alpha\right)}\left[u_{i\mp1}^n - \sum_{j=0}^{n-1}\left(b_{n-j} - b_{n-j+1}\right)u_{i\mp\left(n-j+1\right)}^j\right].
\end{equation}
Suppose we would like to find the solution of
\begin{equation}\label{simplecombinationeq}
\begin{split}
\begin{cases}
    p\left(\dfrac{\partial}{\partial t} - \dfrac{\partial}{\partial x}\right)^{\alpha} u\left(x, t\right) + \left(1-p\right)\left(\dfrac{\partial}{\partial t} + \dfrac{\partial}{\partial x}\right)^{\alpha} u\left(x, t\right)  = f\left(x, t\right), \quad x \in \mathbb{R}, \, 0\leq t\leq T \\ 
    u\left(x, 0\right) = 0, \quad x\in \mathbb{R}.
\end{cases}
\end{split}
\end{equation}
The discretized version above has the form
\begin{equation}\label{combinationscheme}
\begin{split}
\begin{cases}
    u_i^n = \displaystyle{\sum_{j=0}^{n-1}\left(b_{n-j} - b_{n-j+1}\right)\left(pu_{i+\left(n-j\right)}^j + \left(1-p\right)u_{i-\left(n-j\right)}^j\right) +  h^{\alpha}\Gamma\left(2-\alpha\right)\left(pf_{i-1}^n + \left(1-p\right)f_{i+  1}^n\right)},\\
    u_i^0 = 0.
\end{cases}
\end{split}
\end{equation}
Let us show some fundamental properties of the scheme \eqref{combinationscheme}. We start by showing that it is stable. Introduce the discrete $p$-th norm
\begin{equation}\label{discretenorm}
	\left\|u^n\right\|_{p,h} = \left(h\sum_{i \in \mathbb{Z}} \left|u_i^n\right|^p\right)^{\frac{1}{p}}, \quad 1\leq p < \infty. 
\end{equation}
The below result shows that the second norm of the solution is controlled by the norm of the data. 
\begin{proposition}[Stability]\label{prop:Stability}
Let $u_m^n$ be the solution of \eqref{combinationscheme} for $f\left(\cdot, t\right)\in L^2\left(\mathbb{R}\right)$ for each $t \in \left[0, T\right]$, $f\left(x, \cdot\right)\in L^{\infty}\left(0, T\right)$ for each $x \in \mathbb{R}$ and $h\leq\left(1-\alpha\right)^{1/2\alpha}$. Then, we have
\begin{equation}
\max_n\left\|u^n\right\|_{2,h} \leq T^\alpha \Gamma(1-\alpha)\max_n\left\|f^n\right\|_{2,h}.
\end{equation}
\begin{proof}
We start by multiplying \eqref{combinationscheme} by $hu_i^n$ and summing over $i$ to obtain
\begin{equation}\label{eqn:StabilityEst}
\begin{split}
    \left\|u^n\right\|_{2,h}^2 &= h\sum_{i \in \mathbb{Z}}\left[\sum_{j=0}^{n-1}\left(b_{n-j} - b_{n-j+1}\right)\left(pu_{i+\left(n-j\right)}^j u_i^n + \left(1-p\right)u_{i-\left(n-j\right)}^ju_i^n\right) \right. \\
    &\left.+ h^{\alpha}\Gamma\left(2-\alpha\right)\left(pf^n_{i-1}u_i^n + \left(1-p\right)f^n_{i+1}u_i^n \right)\right].
\end{split}
\end{equation}
Now we estimate the first and second term of the outer sum. For the former, using the Cauchy inequality ($ab \leq \left(a^2+b^2\right)/2$), we have
\begin{equation}
    \begin{aligned}
        h\sum_{i \in \mathbb{Z}}\left[\sum_{j=0}^{n-1}\left(b_{n-j} - b_{n-j+1}\right)\left(pu_{i+\left(n-j\right)}^j u_i^n + \left(1-p\right)u_{i-\left(n-j\right)}^ju_i^n\right)\right] \\ \leq\frac{h}{2}\sum_{i \in \mathbb{Z}}\left[\sum_{j=0}^{n-1}\left(b_{n-j} - b_{n-j+1}\right)\left(p\left(\left(u_{i+\left(n-j\right)}^j\right)^2 +  \left(u_i^n\right)^2\right) + \left(1-p\right)\left(\left(u_{i-\left(n-j\right)}^j\right)^2 + \left(u_i^n\right)^2\right)\right)\right].
    \end{aligned}
\end{equation}
Next, we change the order of summation and split the sum over $j$ into three different ones
\begin{equation}
    \begin{aligned}
        h\sum_{i \in \mathbb{Z}}\left[\sum_{j=0}^{n-1}\left(b_{n-j} - b_{n-j+1}\right)\left(pu_{i+\left(n-j\right)}^j u_i^n + \left(1-p\right)u_{i-\left(n-j\right)}^ju_i^n\right)\right]  \\ \leq \frac{1}{2}\sum_{j=0}^{n-1}\left(b_{n-j} - b_{n-j+1}\right)h\sum_{i \in \mathbb{Z}}\left(u_i^n\right)^2 + \frac{p}{2}\sum_{j=0}^{n-1}\left(b_{n-j} - b_{n-j+1}\right)h\sum_{i \in \mathbb{Z}}\left(u_{i+\left(n-j\right)}^j\right)^2 \\ + \frac{1-p}{2}\sum_{j=0}^{n-1}\left(b_{n-j} - b_{n-j+1}\right)h\sum_{i \in \mathbb{Z}}\left(u_{i-\left(n-j\right)}^j\right)^2.
    \end{aligned}
\end{equation}
Finally, using the definition of the norm \eqref{discretenorm}, the fact that $b_{n-j} - b_{n-j+1}\geq0$, and the series $\sum_{j=0}^{n-1}\left(b_{n-j} - b_{n-j+1}\right)$ is telescoping, we obtain
\begin{equation}
    \begin{aligned}
        h\sum_{i \in \mathbb{Z}}\left[\sum_{j=0}^{n-1}\left(b_{n-j} - b_{n-j+1}\right)\left(pu_{i+\left(n-j\right)}^j u_i^n + \left(1-p\right)u_{i-\left(n-j\right)}^ju_i^n\right)\right] \\ \leq \frac{1}{2}\left\|u^n\right\|_{2,h}^2\sum_{j=0}^{n-1}\left(b_{n-j} - b_{n-j+1}\right) + \frac{p}{2}\sum_{j=0}^{n-1}\left(b_{n-j} - b_{n-j+1}\right)\left\|u^j\right\|_{2,h}^2  \\ + \frac{1-p}{2}\sum_{j=0}^{n-1}\left(b_{n-j} - b_{n-j+1}\right)\left\|u^j\right\|_{2,h}^2  = \frac{1}{2}\left(1 - b_{n+1}\right)\left\|u^n\right\|_{2,h}^2 + \frac{1}{2}\sum_{j=0}^{n-1}\left(b_{n-j} - b_{n-j+1}\right)\left\|u^j\right\|_{2,h}^2.
    \end{aligned}
\end{equation}
For the second term in \eqref{eqn:StabilityEst} using Cauchy-Schwarz inequality, we obtain
\begin{equation}
\begin{split}
    h\sum_{i \in \mathbb{Z}}  &h^{\alpha}\Gamma\left(2-\alpha\right)\left(pf^n_{i-1}u_i^n + \left(1-p\right)f^n_{i+1}u_i^n \right) \leq h\sum_{i \in \mathbb{Z}}  h^{\alpha}\Gamma\left(2-\alpha\right)\left(p\left|f^n_{i-1}\right|\left|u_i^n\right| + \left(1-p\right)\left|f^n_{i+1}\right|\left|u_i^n\right| \right) \\ 
    &\leq h^\alpha \Gamma\left(2-\alpha\right)\left(p\sqrt{h\sum_{i \in \mathbb{Z}}\left|f^n_{i-1}\right|^2} \sqrt{h\sum_{i \in \mathbb{Z}}\left|u_i^n\right|^2} + \left(1-p\right)\sqrt{h\sum_{i \in \mathbb{Z}}\left|f^n_{i+1}\right|^2}\sqrt{h\sum_{i \in \mathbb{Z}}\left|u_i^n\right|^2} \right)  \\
    &= h^{\alpha}\Gamma\left(2-\alpha\right)\left\|f^n\right\|_{2,h}\left\|u^n\right\|_{2,h}.
\end{split}
\end{equation}
Now, combining the above inequalities, we have
\begin{equation}\label{firstineqstab}
    \left\|u^n\right\|_{2,h}^2 \leq \frac{1}{2}\left(1 - b_{n+1}\right)\left\|u^n\right\|_{2,h}^2 + \frac{1}{2}\sum_{j=0}^{n-1}\left(b_{n-j} - b_{n-j+1}\right)\left\|u^j\right\|_{2,h}^2 + h^{\alpha}\Gamma\left(2-\alpha\right)\left\|f^n\right\|_{2,h}\left\|u^n\right\|_{2,h},
\end{equation}
Now, using the $\varepsilon-$Cauchy inequality ($ab \leq \left(a^2/\varepsilon+\varepsilon b^2\right)/2$) to the last term on the right-hand side with $a=\left\|f^n\right\|_{2,h}$, $b=\left\|u^n\right\|_{2,h}$, and $\varepsilon = b_{n+1}h^{-\alpha}/\Gamma\left(2-\alpha\right)$ we have 
\begin{equation}
    \left\|u^n\right\|_{2,h}^2 \leq \sum_{j=0}^{n-1}\left(b_{n-j} - b_{n-j+1}\right)\left\|u^j\right\|_{2,h}^2 + \frac{\left[h^{\alpha}\Gamma\left(2-\alpha\right)\right]^2}{b_{n+1}}\left\|f^n\right\|_{2,h}^2.
\end{equation}
And after taking the maximum with respect to $n$, using the telescoping property of the series, and observing that due to \eqref{bdef} the coefficient $b_{n+1}$ is decreasing, we can obtain
\begin{equation}
    \max_j\left\|u^j\right\|_{2,h}^2 \leq \left(b_1 - b_{n+1}\right)\max_j\left\|u^j\right\|_{2,h}^2 + \frac{\left[h^{\alpha}\Gamma\left(2-\alpha\right)\right]^2}{b_{n+1}}\max_j\left\|f^j\right\|_{2,h}^2.
\end{equation}
Since, by \eqref{bdef} we have $b_1 = 1$, we can further simplify both sides to
\begin{equation}
    \max_j\left\|u^j\right\|_{2,h} \leq \frac{h^{\alpha}\Gamma\left(2-\alpha\right)}{b_{n+1}}\max_j\left\|f^j\right\|_{2,h}.
\end{equation}
But, by the mean-value theorem there exist such $n - 1 \leq \xi \leq n$, that
\begin{equation}
    \frac{h^\alpha \Gamma(2-\alpha)}{b_{n+1}} = \frac{h^\alpha \Gamma(2-\alpha)}{n^{1-\alpha}- (n-1)^{1-\alpha}} = \frac{h^{\alpha}\Gamma(2-\alpha)}{(1-\alpha) \xi^{-\alpha}} \leq (nh)^\alpha \Gamma(1-\alpha) \leq T^\alpha \Gamma(1-\alpha),
\end{equation}
so that the multiplicative constant is bounded. The proof is complete. 
\end{proof}
\end{proposition}

Next, we will prove the convergence of the scheme. 
\begin{theorem}[Convergence]\label{convthm}
Let $u\left(x, t\right)$ be the solution of \eqref{simplecombinationeq} and $u_i^n$ be the solution of the numerical scheme \eqref{combinationscheme}. Assume that $f\left(\cdot, t\right)\in L^2\left(\mathbb{R}\right)$, $f\left(x, \cdot\right)\in L^{\infty}\left(0, T\right)$ for each $x \in \mathbb{R}$ and that the truncation error satisfies
\begin{equation}\label{truncassumption}
    \left\| \delta_{\pm}^{\alpha}u^n - \left(\dfrac{\partial}{\partial t} \pm \dfrac{\partial}{\partial x}\right)^{\alpha} u\left(\cdot, t_n\right)\right\|_{2,h} = \left\|R^n\left(h\right)\right\|_{2,h} \leq Ch^r, \quad r>0.
\end{equation}
Then
\begin{equation}
    \left\|u\left(\cdot, t_n\right) - u^n\right\|_{2,h} \leq T^\alpha \Gamma(1-\alpha) C h^r,
\end{equation}
where constant $C$ depends on $\alpha, \, u$ and its derivatives, while order $r>0$ is related to the regularity of the solution.
\end{theorem}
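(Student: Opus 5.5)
The proof follows the classical ``consistency plus stability implies convergence'' pattern. We reduce it to Proposition \ref{prop:Stability} applied to the error, not to the solution. Define the error $e_i^n := u(x_i,t_n) - u_i^n$, where $u(x_i,t_n)$ is understood as the cell-average $\bar u(x_i,t_n)$ used to define the discrete unknowns. By assumption \eqref{truncassumption}, the exact solution satisfies the discrete operator up to a residual:
\begin{equation}
p\,\delta^{\alpha}_{-}u(\cdot,t_n)_i + (1-p)\,\delta^{\alpha}_{+}u(\cdot,t_n)_i = f_i^n + R_i^n(h), \qquad \|R^n(h)\|_{2,h}\le Ch^r ,
\end{equation}
where $R^n$ is the convex combination $pR^n_- + (1-p)R^n_+$ of the two truncation errors. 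Its norm is still bounded by $Ch^r$ by the triangle inequality, since $p+(1-p)=1$. The numerical solution satisfies the same relation with zero residual. The discrete operators $\delta^{\alpha}_{\pm}$ in \eqref{eqn:MatDerDisc} are linear, so subtracting the two relations shows that $e^n$ solves exactly the scheme \eqref{combinationscheme}. The only change is that the source $f$ is replaced by $R(h)$, with the same shifting of indices: $pR_{i-1}^n+(1-p)R_{i+1}^n$ after solving for the diagonal term. The zero initial condition $u(x,0)=0=u_i^0$ gives $e_i^0=0$.

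Next we apply Proposition \ref{prop:Stability} verbatim to the error scheme. We must check that the proof of that proposition only used the linear recursion \eqref{combinationscheme}, the nonnegativity and telescoping of $b_{n-j}-b_{n-j+1}$, and the shift-invariance of the discrete norm under $i\mapsto i\pm k$. It never used any property of $f$ beyond finiteness of $\max_n\|f^n\|_{2,h}$. Hence the proposition gives
\begin{equation}
\max_n \|e^n\|_{2,h} \le T^{\alpha}\Gamma(1-\alpha)\max_n\|R^n(h)\|_{2,h} \le T^{\alpha}\Gamma(1-\alpha)Ch^r ,
\end{equation}
which is the claim. The mesh condition on $h$ from Proposition \ref{prop:Stability} is inherited; for $h$ small it holds automatically.

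The main point requiring care is the first step. Stating precisely that the exact cell-averaged solution plugged into \eqref{combinationscheme} produces exactly the residual $h^{\alpha}\Gamma(2-\alpha)(pR^n_{i-1}+(1-p)R^n_{i+1})$, with consistent index shifts, requires rewriting \eqref{eqn:MatDerDisc} in solved form. The same algebra that led from \eqref{eqn:MatDerDisc} to \eqref{combinationscheme} does this. A related issue is ensuring that $R^n$ has finite discrete $L^2$ norm uniformly in $n$, so that the maximum in the stability bound is meaningful. This is exactly what hypothesis \eqref{truncassumption} supplies, and the regularity of $u$ implicit in the constant $C$ is absorbed there. No further analytic obstacle is expected.
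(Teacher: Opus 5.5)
Your argument follows the same consistency-plus-stability route as the paper, and the stability half is sound. The proof of Proposition \ref{prop:Stability} goes through for any source of the form $h^{\alpha}\Gamma(2-\alpha)\rho^n_i$ with $\max_n\|\rho^n\|_{2,h}<\infty$, and $e^0=0$.

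The gap is in the consistency step. For $0<p<1$ the numerical solution does \emph{not} satisfy $p\delta^{\alpha}_{-}u^n_i+(1-p)\delta^{\alpha}_{+}u^n_i=f^n_i$. By \eqref{eqn:MatDerDisc}, the new-level unknowns enter that relation as $pu^n_{i+1}+(1-p)u^n_{i-1}$, so there is no diagonal term to solve for. Already at $n=1$, where $u^1_i=h^{\alpha}\Gamma(2-\alpha)(pf^1_{i-1}+(1-p)f^1_{i+1})$, one finds $p\delta^{\alpha}_{-}u^1_i+(1-p)\delta^{\alpha}_{+}u^1_i=f^1_i+p(1-p)(f^1_{i+2}-2f^1_i+f^1_{i-2})$. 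What \eqref{combinationscheme} actually encodes is
\[
p\bigl[(\delta^{\alpha}_{-}u)^n_{i-1}-f^n_{i-1}\bigr]+(1-p)\bigl[(\delta^{\alpha}_{+}u)^n_{i+1}-f^n_{i+1}\bigr]=0 .
\]
So the two halves of the equation are imposed at different nodes, whereas the PDE only couples $A_{\pm}:=(\partial_t\pm\partial_x)^{\alpha}u$ at the same point. Insert the exact solution and use the PDE separately at $x_{i-1}$ and $x_{i+1}$. The residual is then $pR^n_{-,i-1}+(1-p)R^n_{+,i+1}+Q^n_i$, which is not a shift of $pR_{-}+(1-p)R_{+}$, where
\[
Q^n_i=p(1-p)\bigl[(A_{+}-A_{-})(x_{i+1},t_n)-(A_{+}-A_{-})(x_{i-1},t_n)\bigr].
\]
The first two terms are bounded by $Ch^r$ via \eqref{truncassumption} and shift invariance. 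However, $Q^n$ is not controlled by \eqref{truncassumption} at all. It vanishes for $p\in\{0,1\}$ or when $A_{+}=A_{-}$ (as in the spatially homogeneous test $f=t^{\mu}$), but not in general.

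The paper's proof makes the same identification: it adds and subtracts $p\delta^{\alpha}_{-}u^n_i+(1-p)\delta^{\alpha}_{+}u^n_i$ and replaces it by $f(x_i,t_n)$ ``using the scheme''. So you have reproduced its route together with this weak point. There are two ways to close it.
\begin{itemize}
\item Assume consistency of the full scheme directly: the residual of the exact solution in \eqref{combinationscheme}, divided by $h^{\alpha}\Gamma(2-\alpha)$, is $O(h^r)$ in $\|\cdot\|_{2,h}$. Your argument then runs verbatim.
\item Assume $\partial_x(A_{+}-A_{-})(\cdot,t)\in L^2(\mathbb{R})$ uniformly in $t$. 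This gives $\|Q^n\|_{2,h}\le 2p(1-p)h\,\|\partial_x(A_{+}-A_{-})(\cdot,t_n)\|_{L^2(\mathbb{R})}$, hence an error bound of order $h^{\min\{r,1\}}$ with an enlarged constant, instead of $T^{\alpha}\Gamma(1-\alpha)Ch^r$.
\end{itemize}
With \eqref{eqn:MatDerDisc}, the operator $\delta^{\alpha}_{\pm}$ at node $i$ is the L1 formula along the characteristic through $x_{i\mp1}$. So one generically has $r\le 1$ anyway, and the second fix costs nothing in the rate.
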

\begin{proof}
We define the error $e_i^n = u\left(x_i, t_n\right) - u^n_i$ and apply the operator $p\left(\dfrac{\partial}{\partial t} - \dfrac{\partial}{\partial x}\right)^{\alpha}+ \left(1-p\right)\left(\dfrac{\partial}{\partial t} + \dfrac{\partial}{\partial x}\right)^{\alpha}$. Then,
\begin{equation}
\begin{split}
    p&\left(\dfrac{\partial}{\partial t} - \dfrac{\partial}{\partial x}\right)^{\alpha} e_i^n + \left(1-p\right)\left(\dfrac{\partial}{\partial t} + \dfrac{\partial}{\partial x}\right)^{\alpha} e_i^n \\
    &=  p\left(\dfrac{\partial}{\partial t} - \dfrac{\partial}{\partial x}\right)^{\alpha} \left(u\left(x_i, t_n\right) - u^n_i\right) + \left(1-p\right)\left(\dfrac{\partial}{\partial t} + \dfrac{\partial}{\partial x}\right)^{\alpha} \left(u\left(x_i, t_n\right) - u^n_i\right).
\end{split}
\end{equation}
Using \eqref{simplecombinationeq}, adding and subtracting $\delta^{\alpha}u_i^n = p\delta^{\alpha}_-u_i^n + q\delta^{\alpha}_+u_i^n$ from the above equation, we have
\begin{equation}
\begin{split}
    p\left(\dfrac{\partial}{\partial t} - \dfrac{\partial}{\partial x}\right)^{\alpha} e_i^n &+ \left(1-p\right)\left(\dfrac{\partial}{\partial t} + \dfrac{\partial}{\partial x}\right)^{\alpha} e_i^n \\ 
    &=f(x_i,t_n) - p\left(\dfrac{\partial}{\partial t} - \dfrac{\partial}{\partial x}\right)^{\alpha}u_i^n - \left(1-p\right)\left(\dfrac{\partial}{\partial t} + \dfrac{\partial}{\partial x}\right)^{\alpha} u_i^n + \delta^{\alpha}u_i^n- \delta^{\alpha}u_i^n 
\end{split}
\end{equation}
Now, combining the respective operators acting on $u^n_i$ and using the definition of discrete fractional material derivatives \eqref{eqn:MatDerDisc} and the scheme, we further obtain
\begin{equation}
\begin{split}
    p\left(\dfrac{\partial}{\partial t} - \dfrac{\partial}{\partial x}\right)^{\alpha} e_i^n &+ \left(1-p\right)\left(\dfrac{\partial}{\partial t} + \dfrac{\partial}{\partial x}\right)^{\alpha} e_i^n \\ 
    &=f\left(x_i, t_n\right) - p\left[\left(\dfrac{\partial}{\partial t} - \dfrac{\partial}{\partial x}\right)^{\alpha} - \delta_-^{\alpha}\right] u_i^n - \left(1-p\right)\left[\left(\dfrac{\partial}{\partial t} + \dfrac{\partial}{\partial x}\right)^{\alpha} - \delta_+^{\alpha}\right] u_i^n - f\left(x_i, t_n\right).
\end{split}
\end{equation}
Hence, we see that the error $e_i^n$ satisfies \eqref{simplecombinationeq} with the source equal to a convex combination of the truncation error of the discrete fractional material derivative. Therefore, from \eqref{truncassumption}, our assumption \eqref{truncassumption}, and Proposition \ref{prop:Stability}, we have
\begin{equation}
\begin{split}
    \left\|e_i^n\right\|_{2,h} &\leq T^\alpha \Gamma(1-\alpha) \left\|p\left[\delta_-^{\alpha}-\left(\dfrac{\partial}{\partial t} - \dfrac{\partial}{\partial x}\right)^{\alpha}\right] u^n + \left(1-p\right)\left[\delta_+^{\alpha}+\left(\dfrac{\partial}{\partial t} - \dfrac{\partial}{\partial x}\right)^{\alpha}\right]  u^n\right\|_{2,h}\\ 
    &\leq T^\alpha \Gamma(1-\alpha) h^r,
\end{split}
\end{equation}
what ends the proof.
\end{proof}

In the context of L\'evy walks, the solution of \eqref{combinationeq} is a probability \eqref{combinationeq}. It is natural to require the same property from the discrete approximation, at least to some error dependent on the grid spacing $h>0$. From Proposition \ref{probconsprop} we know that conservation of probability implies that the source possesses a certain singularity. Numerically speaking, it would not be optimal to construct our schemes based on high-order discretizations. We are thus free to modify our method as long as we stay within the same order of the truncation error. In the next result, we will show that if we compute the source at an advanced time $t = t_{n+1}$, we can easily show the discrete conservation of probability. Our scheme for \eqref{combinationeq} is thus the following
\begin{equation}\label{combinationscheme2}
	\begin{cases}
		u_i^n = \sum_{j=0}^{n-1}\left(b_{n-j} - b_{n-j+1}\right)\left(pu_{i+\left(n-j\right)}^j + \left(1-p\right)u_{i-\left(n-j\right)}^j\right) + h^{\alpha}\Gamma\left(2-\alpha\right)\left(pf_{i-1}^{n+1} + \left(1-p\right)f_{i+  1}^{n+1}\right), \\
		u_i^0 = \psi\left(x_i\right),
	\end{cases}
\end{equation}
where $\psi$ is the numerical initial condition. Below, we prove that it is conservative. 
\begin{theorem}
Let $f$ be a non-negative function satisfying \eqref{pdfcond} with
\begin{equation}\label{discefcond}
\left\|f^n\right\|_{1, h} = \frac{t_n^{-\alpha}}{\Gamma\left(1-\alpha\right)} + \rho\left(h\right),
\end{equation}
with the quadrature error $\rho\left(h\right)$ and $t_n = nh\leq T$. Then, if $\left\|u^0\right\|_{1,h}=1$ we have
\begin{equation}
\left\|u^n\right\|_{1, h} \leq 1 + T\Gamma\left(2-\alpha\right)h^{\alpha-1}\rho\left(h\right).
\end{equation}
\begin{proof}
Because $f\geq0$, from \eqref{combinationscheme2} we have $u_j^n\geq 0$. Since $b_{n-j}-b_{n-j+1}\geq0$, multiplying \eqref{combinationscheme2} by $h$ and summing over $i$, we obtain
\begin{equation}
\begin{split}
    \left\|u^n\right\|_{1, h} &= \sum_{j=0}^{n-1}\left(b_{n-j}-b_{n-j+1}\right)\left(p\left\|u^j\right\|_{1, h}+\left(1-p\right)\left\|u^j\right\|_{1, h}\right) + h^{\alpha}\Gamma\left(2-\alpha\right)\left(p\left\|f^{n+1}\right\|_{1, h}+\left(1-p\right)\left\|f^{n+1}\right\|_{1, h}\right) \\ 
    &= \sum_{j=0}^{n-1}\left(b_{n-j}-b_{n-j+1}\right)\left\|u^j\right\|_{1, h} + h^{\alpha}\Gamma\left(2-\alpha\right)\left\|f^{n+1}\right\|_{1, h}.
\end{split}
\end{equation}
Now, from \eqref{discefcond}, $\left\|u_0\right\|_{1,h}=1$ and fact, that $t_{n+1} = \left(n+1\right)h$, we have
\begin{equation}
    \left\|u^n\right\|_{1, h} = b_n - b_{n+1} +\sum_{j=1}^{n-1}\left(b_{n-j}-b_{n-j+1}\right)\left\|u^j\right\|_{1, h} + \frac{1-\alpha}{\left(n+1\right)^{\alpha}}+ h^{\alpha}\Gamma\left(2-\alpha\right)\rho\left(h\right).
\end{equation}
Now, we will prove by induction that
\begin{equation}\label{indas}
    \left\|u^n\right\|_{1, h} \leq 1 + nh^{\alpha}\Gamma\left(2-\alpha\right)\rho\left(h\right),
\end{equation}
because then,
\begin{equation}
    \left\|u^n\right\|_{1, h} \leq 1 + nhh^{\alpha-1}\Gamma\left(2-\alpha\right)\rho\left(h\right) \leq 1 + Th^{\alpha-1}\Gamma\left(2-\alpha\right)\rho\left(h\right),
\end{equation}
what is our claim. For $n=1$, from \eqref{bdef} we have
\begin{equation}
    \left\|u^1\right\|_{1, h} \leq 2 - 2^{1-\alpha}+ \frac{1-\alpha}{2^{1-\alpha}}+h^{\alpha}\Gamma\left(2-\alpha\right)\rho\left(h\right)
\end{equation}
By elementary calculus one can show that the function $\alpha \mapsto 2 - 2^{1-\alpha}+ (1-\alpha)(2^{\alpha-1}$ is positive and bounded from below by $2\left(\log 2 - e^{-1}\right)/\log 2$ and by $1$ from above. Hence
\begin{equation}
    \left\|u^1\right\|_{1, h} \leq 1 + h^{\alpha}\Gamma\left(2-\alpha\right)\rho\left(h\right).
\end{equation}
Now, assume that \eqref{indas} holds for all $k=1, 2, \ldots, n-1$. For $n$, we then have
\begin{equation}
    \left\|u^1\right\|_{1, h} \leq b_n - b_{n+1} +\left(1 + \left(n-1\right)h^{\alpha}\Gamma\left(2-\alpha\right)\rho\left(h\right)\right)\sum_{j=0}^{n-1}\left(b_{n-j}-b_{n-j+1}\right)+ \frac{1-\alpha}{\left(n+1\right)^{\alpha}}+ h^{\alpha}\Gamma\left(2-\alpha\right)\rho\left(h\right).
\end{equation}
Since the sum above is telescoping, we have
\begin{equation}
    \begin{aligned}
        \left\|u^1\right\|_{1, h} \leq b_n - b_{n+1} +\left(1 + \left(n-1\right)h^{\alpha}\Gamma\left(2-\alpha\right)\rho\left(h\right)\right)\left(1-b_{n}\right)+ \frac{1-\alpha}{\left(n+1\right)^{\alpha}}+ h^{\alpha}\Gamma\left(2-\alpha\right)\rho\left(h\right) = \\ =  1 -  b_{n-1} + \frac{1-\alpha}{\left(n+1\right)^{\alpha}} + \left(n-b_n\right)h^{\alpha}\Gamma\left(2-\alpha\right)\rho\left(h\right) \leq 1 -  b_{n-1} + \frac{1-\alpha}{\left(n+1\right)^{\alpha}} + nh^{\alpha}\Gamma\left(2-\alpha\right)\rho\left(h\right).
    \end{aligned}
\end{equation}
Using Taylor expansion, we obtain
\begin{equation}
    b_{n+1}\geq \frac{1-\alpha}{n^{\alpha}} + \frac{\alpha\left(1-\alpha\right)}{2}\frac{1}{n^{\alpha+1}}.
\end{equation}
Thus, we have
\begin{equation}
    \left\|u^1\right\|_{1, h} \leq 1  - \left(1-\alpha\right)\left(\frac{1}{n^{\alpha}} - \frac{1}{\left(n+1\right)^{\alpha+1}}\right) - \frac{\alpha\left(1-\alpha\right)}{2}\frac{1}{n^{\alpha+1}} + nh^{\alpha}\Gamma\left(2-\alpha\right)\rho\left(h\right) \leq 1  + nh^{\alpha}\Gamma\left(2-\alpha\right)\rho\left(h\right),
\end{equation}
what ends the proof.
\end{proof}
\end{theorem}

\section{Numerical simulations}
In this section, we illustrate our results with several numerical examples. First, the discretization of the Dirac delta is performed using the cosine approximation in the form \cite{di2021remarks}
\begin{equation}
    \delta\left(x\right) \approx \frac{1 + \cos\left(\frac{\pi x}{Kh}\right)}{2Kh}\mathbf{1}_{\left\{-Kh\leq x \leq Kh\right\}}\left(x\right),
\end{equation}
where $h$ is the discretization step, $K\in \mathbb{N}_0$ is the chosen number (we use $K=2$) and $\mathbf{1}$ is an indicator. In Figures \ref{fig::wfp}, \ref{fig::jfp} and \ref{fig::cp} we compare numerical and analytical solutions (using \ref{combinationscheme2}) to the equations corresponding to the three types of L\'evy walks for $p\in \left\{0.05, 0.25, 0.5\right\}$ and $\alpha=0.5$. We set $h=2^{-11}$ and $T=1$. As can be seen, the validity of the scheme is confirmed with good accuracy. 

\begin{figure}
	\centering
	\includegraphics[width=1\textwidth, height=0.2\textheight]
	{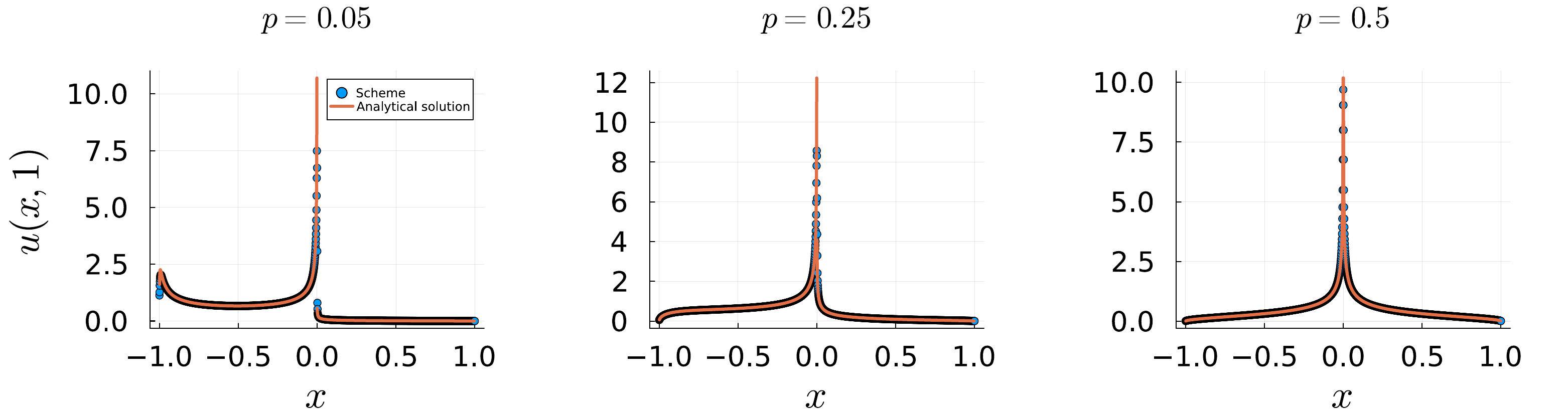}
	\caption{Comparison of analytical solution and numerical scheme for WF L\'evy walk for $\alpha=0.5$ and different values of $p$.
	}
    \label{fig::wfp}
\end{figure}

\begin{figure}
	\centering
	\includegraphics[width=1\textwidth, height=0.2\textheight]
	{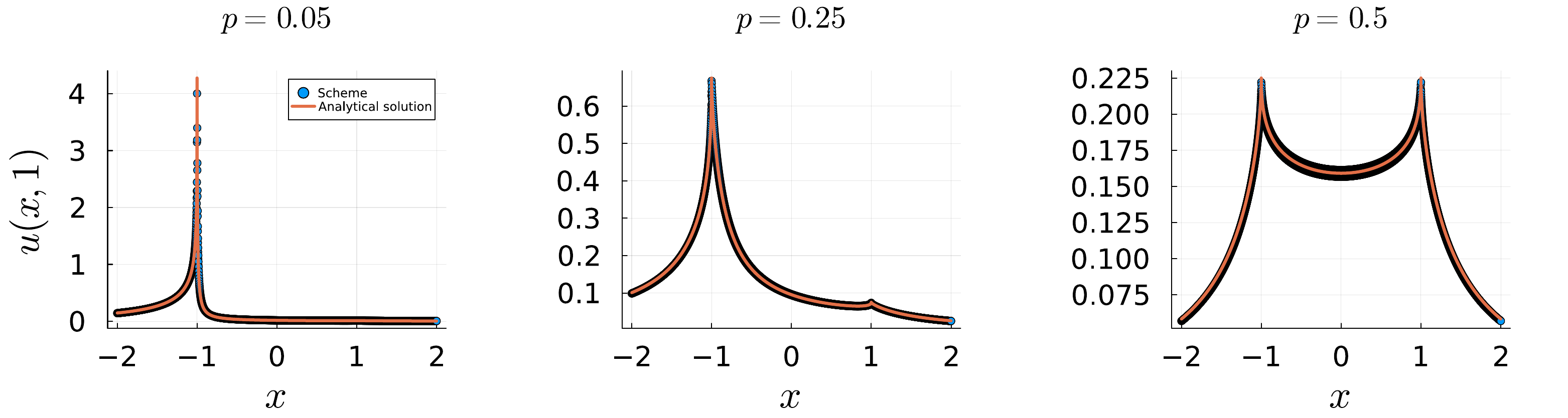}
	\caption{Comparison of analytical solution and numerical scheme for JF L\'evy walk for $\alpha=0.5$ and different values of $p$.
	}
    \label{fig::jfp}
\end{figure}

\begin{figure}
	\centering
	\includegraphics[width=1\textwidth, height=0.2\textheight]
	{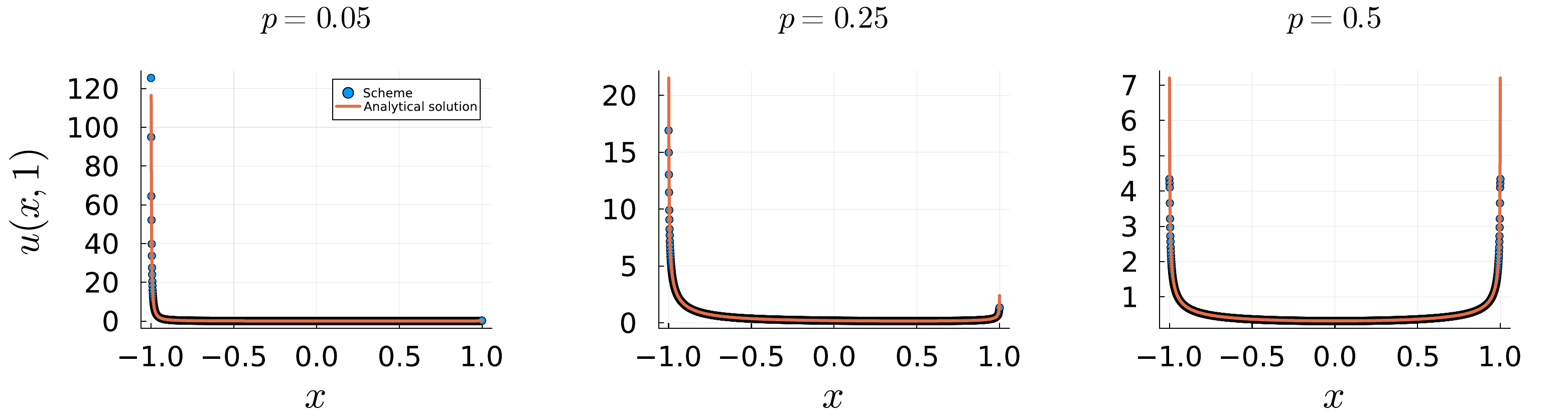}
	\caption{Comparison of analytical solution and numerical scheme for standard L\'evy walk for $\alpha=0.5$ and different values of $p$.
	}
    \label{fig::cp}
\end{figure}

Similarly, in Figures \ref{fig::wfa}, \ref{fig::jfa} and \ref{fig::ca} we compare numerical and analytical solutions to equations corresponding to the three types of L\'evy walks for $\alpha\in \left\{0.25, 0.5, 0.75\right\}$ and $p=0.25$. We set $h=2^{-11}$ and $T=1$. In all cases, we observe a very good accuracy of our scheme.

\begin{figure}
	\centering
	\includegraphics[width=1\textwidth, height=0.2\textheight]
	{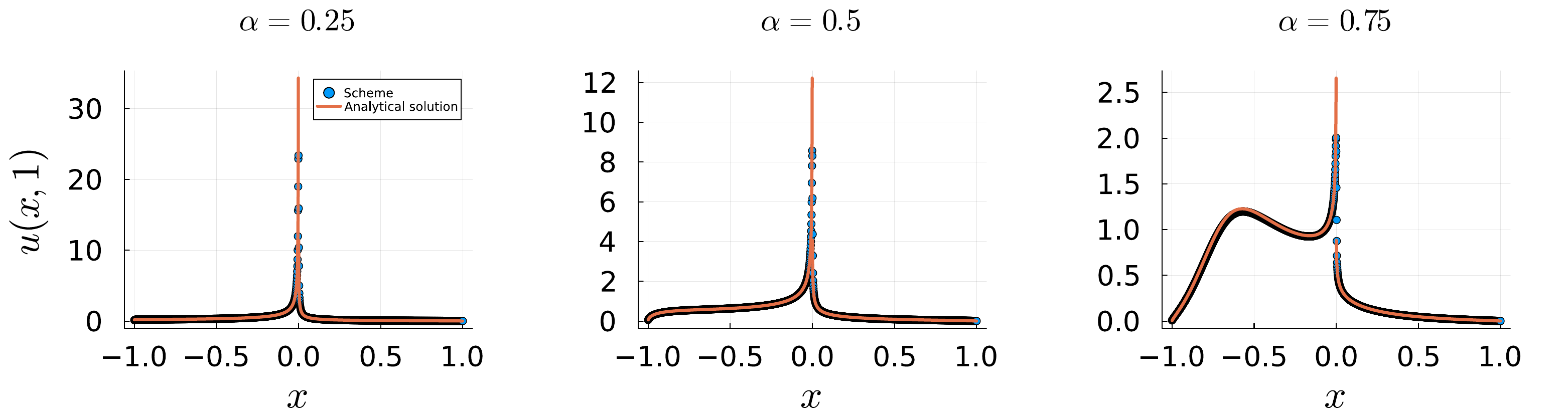}
	\caption{Comparison of analytical solution and numerical scheme for WF L\'evy walk for $p=0.25$ and different values of $\alpha$.
	}
    \label{fig::wfa}
\end{figure}

\begin{figure}
	\centering
	\includegraphics[width=1\textwidth, height=0.2\textheight]
	{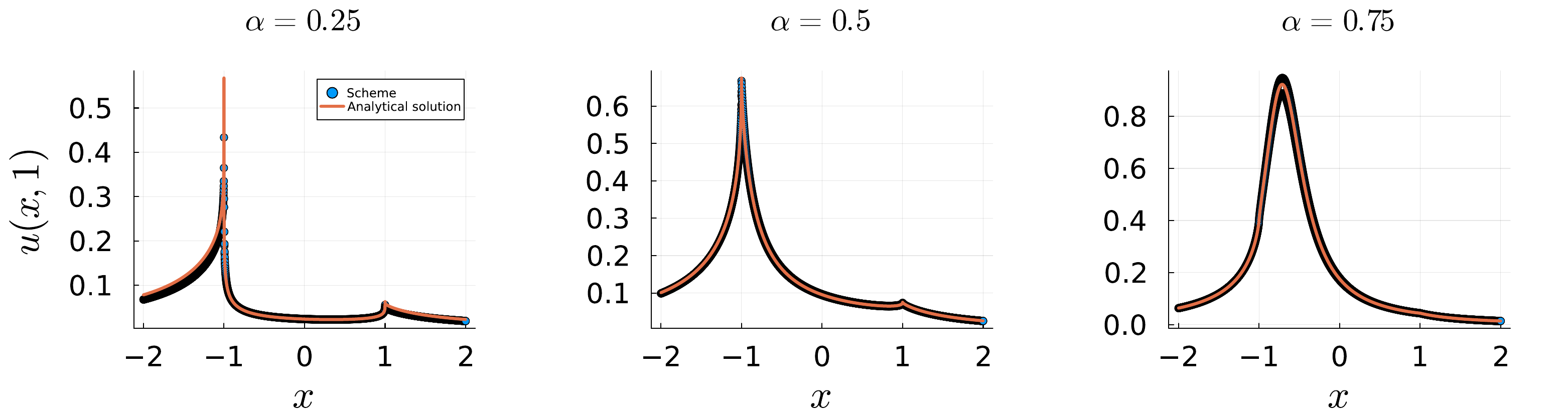}
	\caption{Comparison of analytical solution and numerical scheme for JF L\'evy walk for $p=0.25$ and different values of $\alpha$.
	}
    \label{fig::jfa}
\end{figure}

\begin{figure}
	\centering
	\includegraphics[width=1\textwidth, height=0.2\textheight]
	{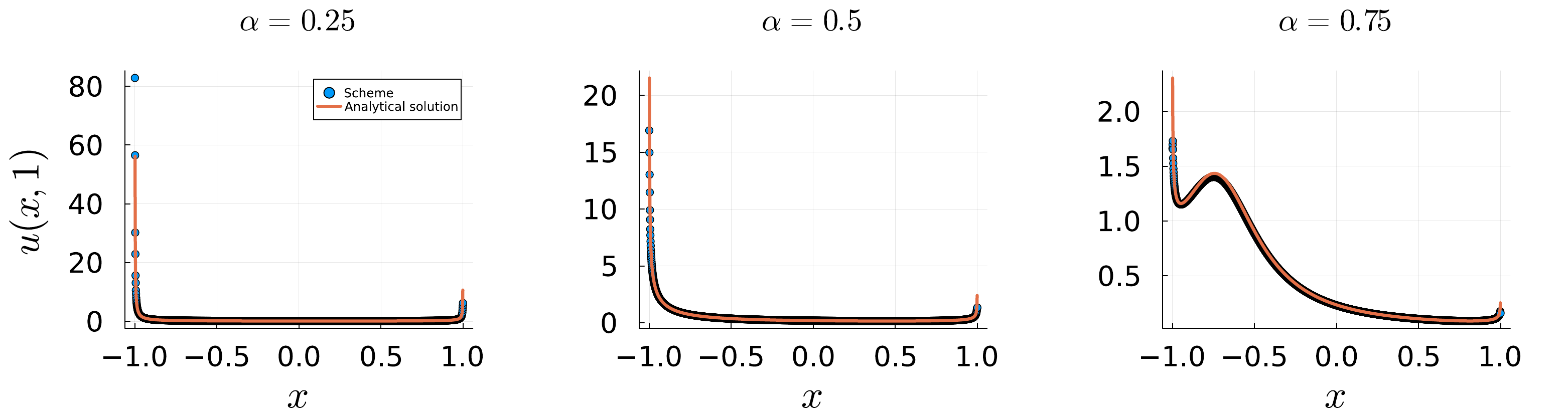}
	\caption{Comparison of analytical solution and numerical scheme for standard L\'evy walk for $p=0.25$ and different values of $\alpha$.
	}
    \label{fig::ca}
\end{figure}

In Figures \ref{fig::probcons1} and \ref{fig::probcons2}, we show how well each scheme preserves the total probability over time for the equation corresponding to the scaling limit of the wait-first L\'evy walk. Both the conventional finite-volume upwind method (\eqref{combinationscheme}, Fig. \ref{fig::probcons1}) and our modified version, which evaluates the source term using the next-time level value (\eqref{combinationscheme2}, Fig. \ref{fig::probcons2}) are conservative, that is, the total probability approaches over time, from either side, the value $1$. Note that immediately after $t=0$, the initial condition of the Dirac delta $u(x,0)=\delta(x)$ distorts the initial probability values. Beyond that initial transient, the two methods exhibit opposite behaviors: the standard upwind discretization yields a steadily decreasing total probability, whereas the modified scheme produces a monotonically increasing curve. In particular, the monotonic growth of the total probability in the modified scheme is rigorously ensured by Proposition \ref{probconsprop}.

\begin{figure}
	\centering
	\includegraphics[width=0.6\textwidth, height=0.3\textheight]
	{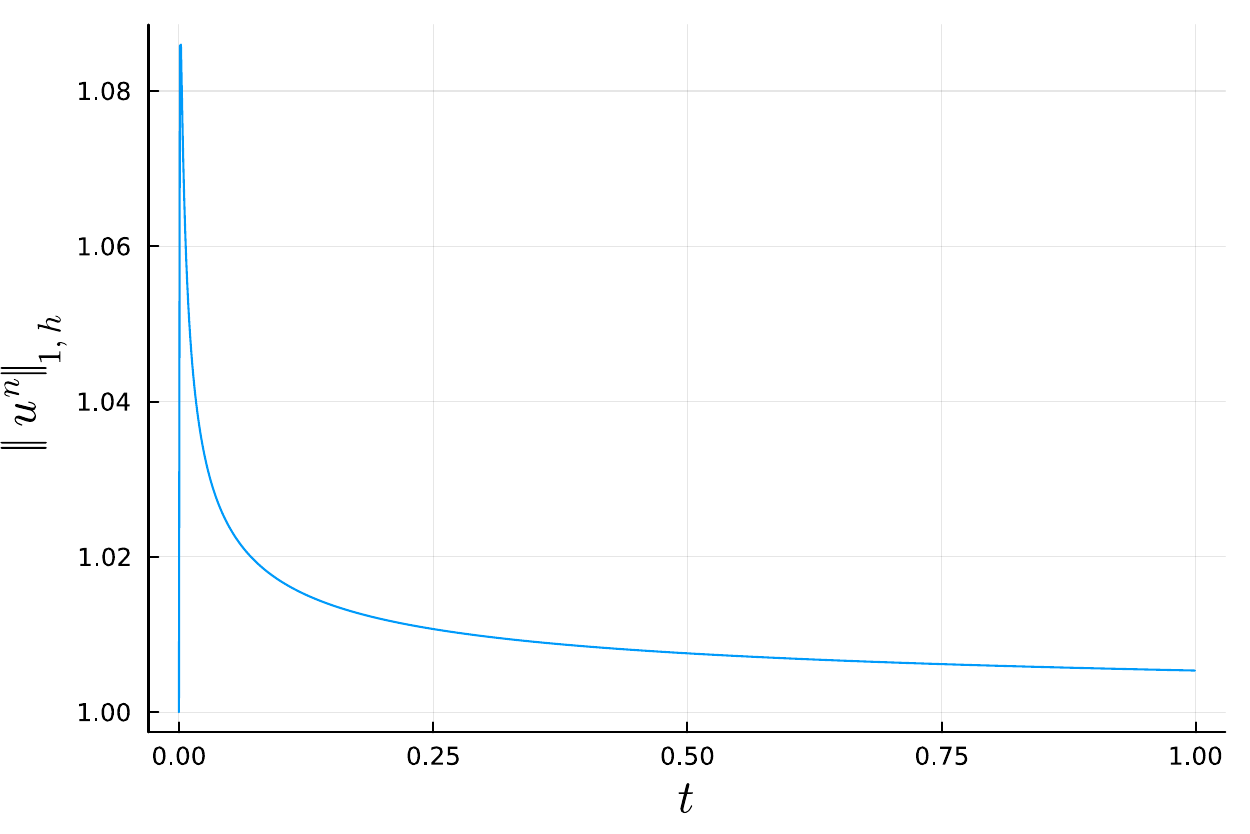}
	\caption{Visualization of the conservation law for the total probability for the consecutive time points of the numerical solution to a PDF problem of the scaling
		limit of the wait-first Lévy walk (Example \ref{wfex}), using standard finite volume upwind scheme \eqref{combinationscheme}. The parameters used in the numerical scheme are $\alpha=0.5, \, p=0.5, \, h=2^{-10}$.}
    \label{fig::probcons1}
\end{figure}

\begin{figure}
	\centering
	\includegraphics[width=0.6\textwidth, height=0.3\textheight]
	{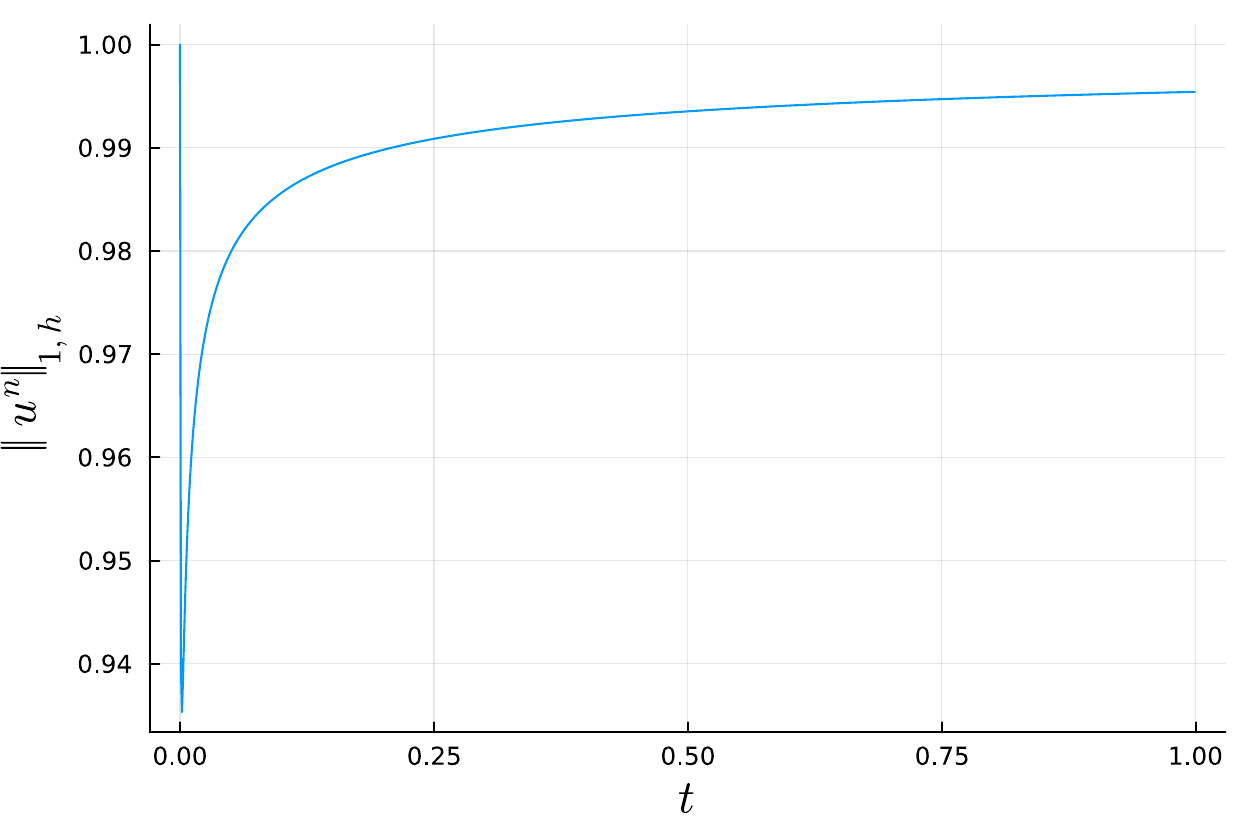}
	\caption{Visualization of the conservation law for the total probability for the consecutive time points of the numerical solution to a PDF problem of the scaling
		limit of the wait-first Lévy walk (Example \ref{wfex}), using modified finite volume upwind scheme \eqref{combinationscheme2}. The parameters used in the numerical scheme are $\alpha=0.5, \, p=0.5, \, h=2^{-10}$.}
    \label{fig::probcons2}
\end{figure}

We now test our scheme on the homogeneous problem \eqref{simplecombinationeq} with $u\left(x,0\right)=0$ and $f\left(x,t\right) = t^{\mu}$ with $\mu>0$. In this case, the solution has analytical form $u\left(x,t\right) = \Gamma\left(\mu+1\right)t^{\mu+\alpha}/\Gamma\left(\mu+\alpha+1\right)$. This closed-form expression allows us to measure the numerical error directly and extract the empirical convergence order. Figures \ref{fig::etlog} and \ref{fig::et2log} display these convergence rates for $\mu=1,2$, mesh sizes $h\in\left\{2^{-12}, 2^{-11}, \ldots, 2^{-4}\right\}$ and $\alpha\in\left\{0.1,0.25,0.5,0.75,0.9\right\}$. Our numerical experiments show that for $\mu=1,2$ the convergence rate of the scheme under the $L^{\infty}$ norm is $r=2-\alpha$ which confirms the results of the standard literature for the case of high regularity \cite{plociniczak2023linear}. On the other hand, in Figure \ref{fig::el2} we show the rate of convergence for the PDF of the scaling limits of the first wait-wait L'evy walk for mesh sizes $h\in\left\{2^{-12}, 2^{-11}, \ldots, 2^{-4}\right\}$ and $\alpha\in\left\{0.1,0.25,0.5,0.75,0.9\right\}$. For all values of $\alpha$, the estimated convergence rate in the norm $L^2$ is equal to 1, which confirms the theoretical results of Theorem \ref{convthm} and is consistent with the discretization L1 of the Caputo derivative calculated away from the initial time (see \cite{stynes2017error}). 

\begin{figure}
	\centering
	\includegraphics[width=0.6\textwidth, height=0.3\textheight]
	{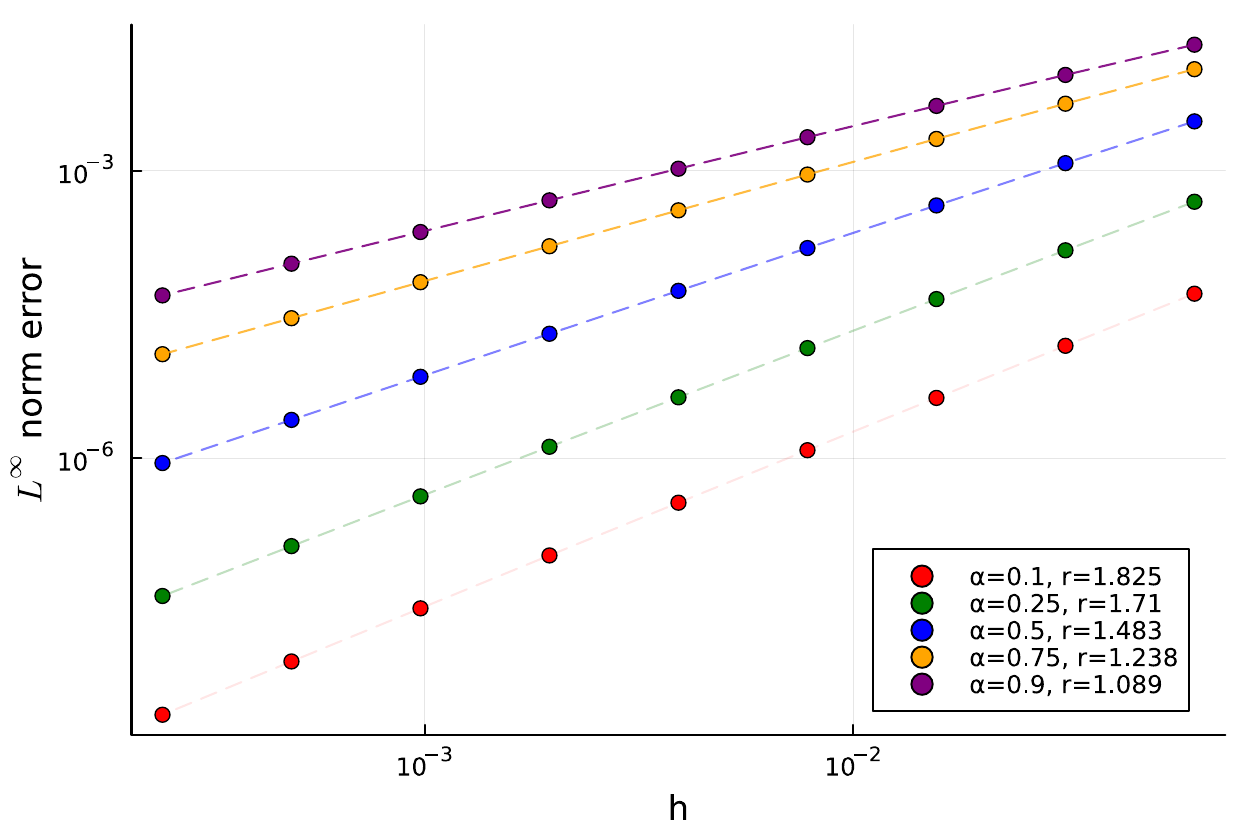}
	\caption{The $L^{\infty}$ norm error for $h\in \left\{2^{-12}, 2^{-11}, \ldots, 2^{-4}\right\}$ and $\alpha \in \left\{0.1, 0.25, 0.5, 0.75, 0.9\right\}$ for the numerical solution to \eqref{simplecombinationeq} with $u\left(x, 0\right) = 0$, $f\left(x,t\right) = t$ and $p=0.5$ in log-log scale.
	}
    \label{fig::etlog}
\end{figure}

\begin{figure}
	\centering
	\includegraphics[width=0.6\textwidth, height=0.3\textheight]
	{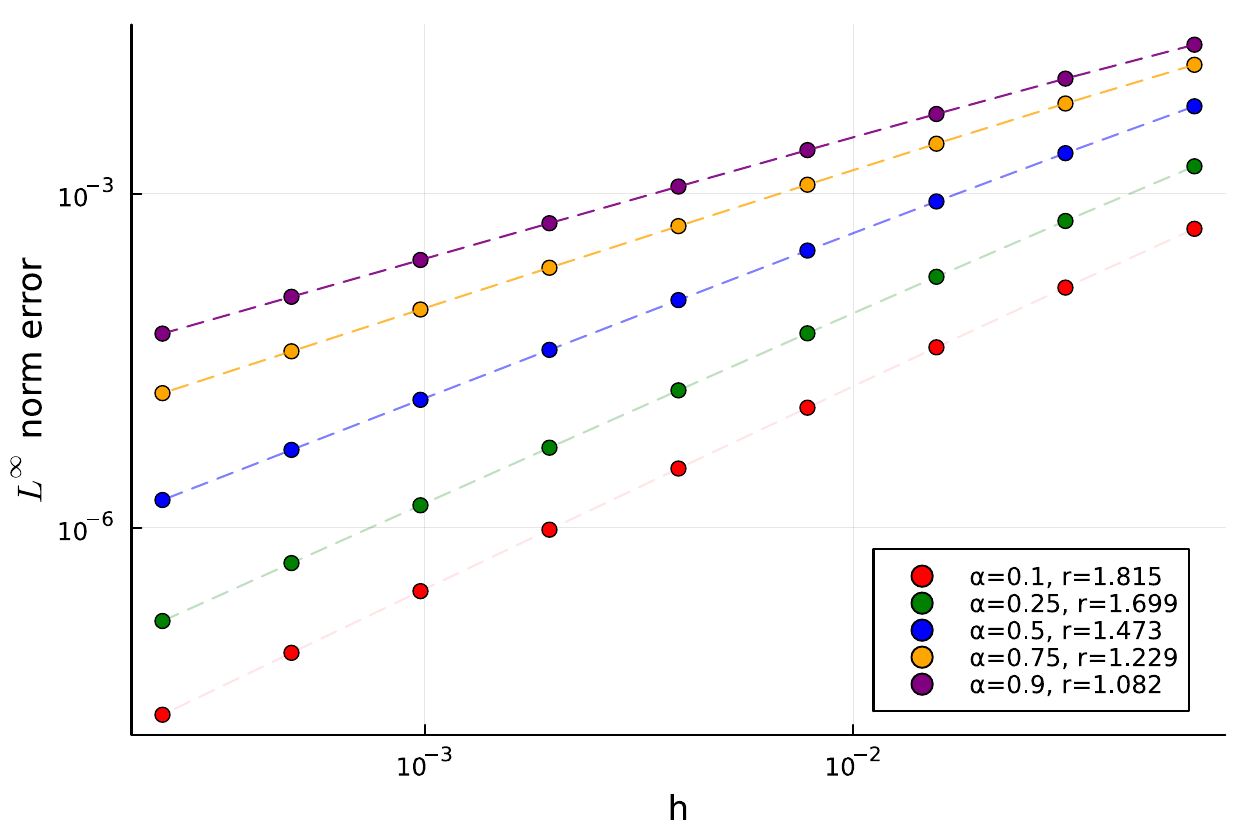}
	\caption{The $L^{\infty}$ norm error for $h\in \left\{2^{-12}, 2^{-1}, \ldots, 2^{-4}\right\}$ and $\alpha \in \left\{0.1, 0.25, 0.5, 0.75, 0.9\right\}$ for the numerical solution to \eqref{simplecombinationeq} with $u\left(x, 0\right) = 0$, $f\left(x,t\right) = t^2$ and $p=0.5$ in log-log scale.
	}
    \label{fig::et2log}
\end{figure}

\begin{figure}
	\centering
	\includegraphics[width=0.6\textwidth, height=0.3\textheight]
	{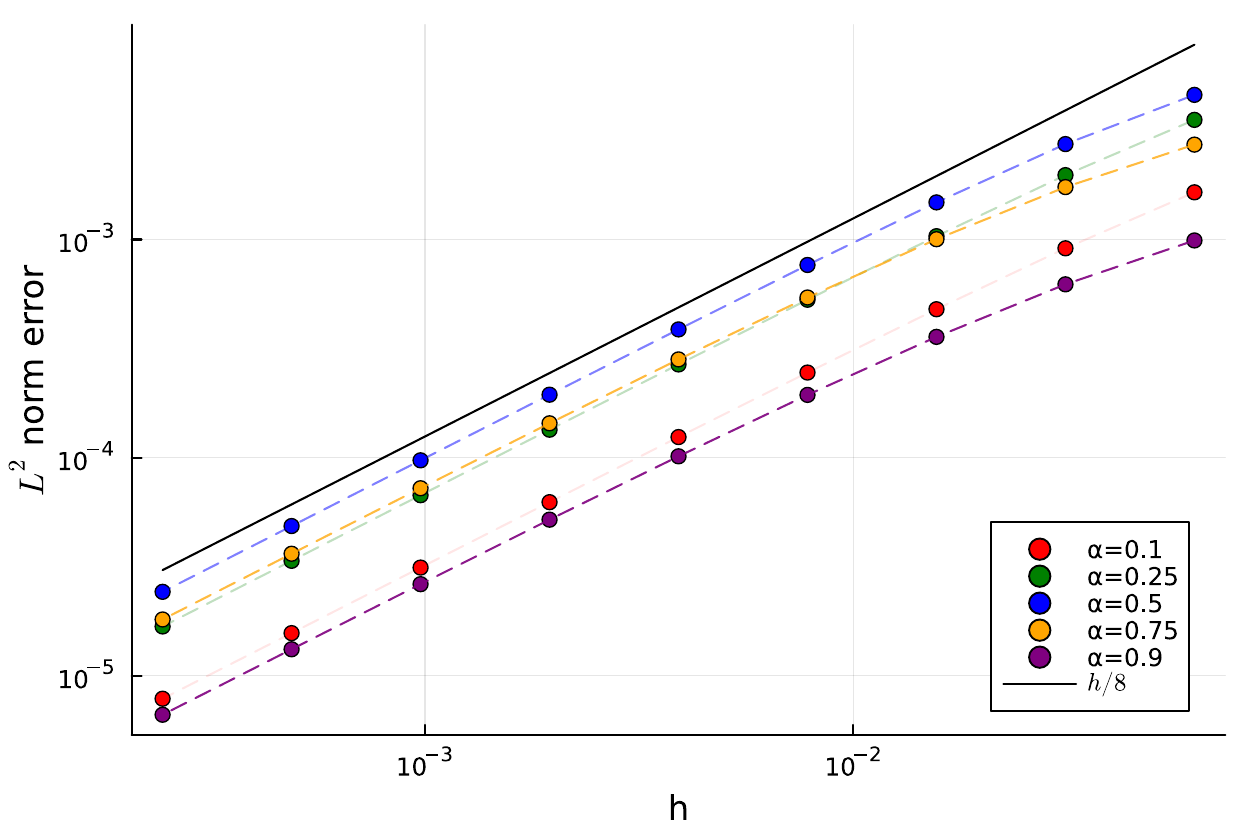}
	\caption{The $L^{2}$ norm error for $h\in \left\{2^{-12}, 2^{-1}, \ldots, 2^{-4}\right\}$ and $\alpha \in \left\{0.1, 0.25, 0.5, 0.75, 0.9\right\}$ for the numerical solution of the PDF problem of the scaling limit of the  wait-first L\'evy walk (see Example \ref{wfex}) in log-log scale.
	}
    \label{fig::el2}
\end{figure}

\section{Conclusions}
In this work, we have introduced and analyzed a class of linear transport equations driven by a convex combination of fractional material derivatives, motivated by the deterministic limits of L'evy walks. This is an important generalization of the results presented in \cite{Plociniczak2024}, where only equations driven solely by a single fractional material derivative were considered. We proved the existence and uniqueness of (mild) solutions. Using a recently developed pointwise representation of the fractional material derivative, we identified a sharp necessary and sufficient condition that guarantees that the solution remains a probability density for all times (nonnegativity and preservation of total mass). To enable reliable computations that respect these probabilistic properties, we designed a finite-volume discretization that is mass conservative by construction. We established discrete stability and convergence to the continuous solution as the mesh is refined, and demonstrated through numerical experiments that the scheme preserves positivity, conserves mass with respect to discretization error, and reproduces the anomalous spreading phenomena associated with asymmetric L\'evy-walk dynamics, i.e. the scaling limits of L\'evy-walks with the probabilities of jumps to the right and to the left not being equal to each other. In general, the combined theoretical and numerical results provide a consistent and practical framework for deterministic modeling and simulation of anomalous transport through fractional material dynamics.

Promising extensions of this study include nonlinear models, variable coefficients and heterogeneous media,  higher dimensions and anisotropy, and sharper error estimates and convergence rates (relevant in the presence of singularity). 

\section*{Acknowledgments}
This work has been supported by the National Science Centre, Poland (NCN) under the grant Sonata Bis with a number NCN 2020/38/E/ST1/00153. 

\bibliographystyle{plain}
\bibliography{bibliography.bib}
\appendix
\end{document}